\newtheorem{theorem}{Theorem}[section]
\newtheorem{lemma}[theorem]{Lemma}
\newtheorem{corollary}[theorem]{Corollary}
\newtheorem{proposition}[theorem]{Proposition}
\newtheorem{sublemma}{}[theorem]
\theoremstyle{definition}
\theoremstyle{remark}
\numberwithin{equation}{section}
\newcommand{\ba}{\backslash}
\newcommand{\N}{\mathbb{N}}
\newcommand{\A}{\mathscr{A}}
\newcommand{\I}{\mathcal{I}}
\newcommand{\Ms}{\mathscr{M}}
\newcommand{\Ns}{\mathscr{N}}
\newcommand{\cl}{{\rm cl}}
\begin{document}

\title{Generalized Laminar Matroids}

\author{Tara Fife}
\email{tfife2@lsu.com}
\author{James Oxley}
\email{oxley@math.lsu.edu}
\address{Mathematics Department, Louisiana State University, Baton Rouge, Louisiana, USA}

\subjclass{05B35}
\date{\today}

 \begin{abstract}

Nested matroids were introduced by Crapo in 1965 and have appeared frequently in the literature since then.  A flat of a matroid $M$ is  Hamiltonian if it has a spanning circuit. 
A matroid $M$ is nested if and only if its Hamiltonian flats form a chain under inclusion;  $M$ is laminar if and only if, for every $1$-element independent set $X$, the Hamiltonian flats of $M$ containing $X$ form a chain under inclusion. We generalize these notions to define the  classes of $k$-closure-laminar and $k$-laminar matroids. This paper focuses on structural properties of these classes noting that, while the second class is always minor-closed, the first is if and only if $k \le 3$. The main results are excluded-minor characterizations for the classes of 2-laminar and 2-closure-laminar matroids.

\end{abstract}

\maketitle

\section{Introduction}
\label{Intro}

Our  matroid terminology follows Oxley \cite{James}.
A transversal matroid is {\em nested} if it has a nested  presentation, that is, a transversal presentation $(B_1,B_2,\dots B_n)$  such that $B_1\subseteq B_2\subseteq\dots\subseteq B_n$. These matroids were introduced by Crapo \cite{crapo} and have appeared under a variety of names including freedom matroids ~\cite{CS}, generalized Catalan matroids \cite{BDMN}, shifted matroids \cite{ard}, and Schubert matroids \cite{soh}  

A family $\A$ of subsets of a set $E$ is \textit{laminar} if, for every two intersecting sets $A$ and $B$ in $\A$, either $A\subseteq B$ or $B\subseteq A$. Let $\A$ be a laminar family of subsets of a finite set $E$ and $c$ be a function from $\A$ into the set of non-negative integers. Define $\I$ to be the set of subsets $I$ of $E$ such that $|I\cap A| \leq c(A)$ for all $A$ in $\A$. It is well known (see, for example, \cite{LF,GK, IW, kl}) and easily checked that $\I$ is the set of independent sets of a matroid on $E$. We 
write the matroid as $M(E,\A,c)$. A matroid $M$ is \textit{laminar} if it is isomorphic to $M(E,\A,c)$ for some set $E$, laminar family $\A$, and  function $c$. 

Laminar matroids have appeared  often during the last fifteen years 
particularly in relation to their behavior for the matroid secretary problem and other optimization problems \cite{bik, CL, FSZ,IW,JSZ, Soto}. Huynh~\cite{huynh} reviewed  this work while Finkelstein~\cite{LF} investigated some of the structural properties of laminar matroids. In \cite{OurPaper}, we characterized laminar matroids both constructively and via excluded minors. 

In \cite{OurPaper}, we showed that all nested matroids are laminar  and noted a number of similarities between the classes of nested and laminar matroids. Here we  exploit some of these similarities to define two natural infinite families of classes of matroids, each having the classes of nested and laminar matroids as their smallest members. Every matroid belongs to a member of each of these families. 

We say that a  flat in a matroid is \textit{Hamiltonian} if it has a spanning circuit. In~\cite{Don}, it was shown that a matroid is nested if and only if its Hamiltonian flats form a chain under inclusion. This immediately yields the following result.

\begin{proposition}
A matroid is nested if and only if, for all circuits $C_1$ and $C_2$, either $C_1\subseteq \cl(C_2)$, or $C_2\subseteq \cl(C_1)$. 
\end{proposition}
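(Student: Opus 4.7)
The plan is to derive this as an immediate translation of the cited characterization from \cite{Don}: a matroid is nested iff its Hamiltonian flats form a chain under inclusion. The bridge between the two statements is the simple observation that the Hamiltonian flats of $M$ are precisely the sets of the form $\cl(C)$ where $C$ is a circuit of $M$. Indeed, for any circuit $C$, the set $\cl(C)$ is a flat whose rank equals $|C|-1$, so $C$ itself is a spanning circuit of $\cl(C)$; conversely, if $F$ is a Hamiltonian flat with spanning circuit $C$, then $\cl(C)=F$ because $F$ is closed and $C$ spans it.

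For the forward direction, I would assume $M$ is nested and take arbitrary circuits $C_1$ and $C_2$. By \cite{Don}, the Hamiltonian flats $\cl(C_1)$ and $\cl(C_2)$ are comparable, so without loss of generality $\cl(C_1)\subseteq \cl(C_2)$, which immediately gives $C_1\subseteq\cl(C_2)$.

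For the converse, I would assume the condition on circuits and check that any two Hamiltonian flats $F_1$ and $F_2$ are comparable. Pick spanning circuits $C_1$ and $C_2$ of $F_1$ and $F_2$ respectively, so $\cl(C_i)=F_i$. The hypothesis gives (say) $C_1\subseteq\cl(C_2)=F_2$; taking closures and using monotonicity and idempotence yields $F_1=\cl(C_1)\subseteq\cl(F_2)=F_2$. Then one invokes \cite{Don} again to conclude $M$ is nested.

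There is no real obstacle here: the only thing to be careful about is the correspondence between Hamiltonian flats and closures of circuits, which must be stated explicitly so that the equivalence with the chain condition of \cite{Don} is visible. Once that identification is in place, both directions are one-line closure manipulations.
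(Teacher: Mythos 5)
Your proposal is correct and matches the paper's treatment: the paper offers no separate proof, stating only that the result follows immediately from the characterization in the cited reference that a matroid is nested if and only if its Hamiltonian flats form a chain, which is precisely the translation you carry out. The identification of Hamiltonian flats with closures of circuits is exactly the (unstated) bridge the authors have in mind, and your two one-line closure arguments fill it in correctly.
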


This parallels the following characterization of laminar matroids found in \cite{OurPaper}.

\begin{theorem}
\label{Converse}
A matroid is laminar if and only if, for all circuits $C_1$ and $C_2$  with $|C_1\cap C_2| \ge 1$, either $C_1\subseteq \cl(C_2)$, or $C_2\subseteq \cl(C_1)$. 
\end{theorem}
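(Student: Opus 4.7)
My strategy is to prove both directions directly. The forward direction follows quickly from the laminar family structure; the backward direction requires constructing an explicit laminar presentation and rests on a sub-lemma guaranteeing spanning circuits through prescribed elements. Specifically, for the forward direction, assume $M = M(E,\A,c)$ and let $C_1, C_2$ be circuits sharing an element $e$. The plan is to associate to each $C_i$ a set $A_i \in \A$: since $C_i$ is dependent, some $A \in \A$ satisfies $|C_i \cap A| > c(A)$, and minimality of $C_i$ as a dependent set forces both $C_i \subseteq A$ and $|C_i| = c(A)+1$. One then verifies $r(A) = c(A)$ (the capacity constraint gives the upper bound, while $C_i$ minus any element yields an independent subset of $A$ of size $c(A)$), so that $C_i$ spans $A$ and $A \subseteq \cl(C_i)$; let $A_i$ be this $A$. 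Because $e \in A_1 \cap A_2$ and $\A$ is laminar, without loss of generality $A_1 \subseteq A_2$, and then $C_1 \subseteq A_1 \subseteq A_2 \subseteq \cl(C_2)$.

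For the backward direction, I would take $\A$ to be the set of Hamiltonian flats of $M$ together with $E$, and set $c(F) = r(F)$. That $M(E, \A, c) = M$ is routine: independent sets of $M$ obey the capacity constraints since $|I \cap F| \leq r(F)$, and any dependent set of $M$ contains a circuit $C$ violating the constraint at $\cl(C) \in \A$, where $|C \cap \cl(C)| = |C| > r(\cl(C))$. The substantive task is verifying that $\A$ is laminar---that is, that two Hamiltonian flats $F_1, F_2$ sharing a non-loop element $e$ are comparable. If one can find spanning circuits $D_i$ of $F_i$ with $e \in D_i$, the hypothesis applied to $D_1, D_2$ gives $D_1 \subseteq \cl(D_2) = F_2$ or $D_2 \subseteq F_1$, so the flats are comparable. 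Loops, which lie in every Hamiltonian flat, would be handled separately by adding singletons $\{e\}$ with $c(\{e\})=0$ for each loop and running the main argument on the loopless part of the matroid.

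The hard step is therefore the sub-lemma: under the hypothesis, every non-loop element of a Hamiltonian flat $F$ lies on a spanning circuit of $F$. I would prove this by contradiction. Fix a spanning circuit $D$ of $F$ with non-loop $e \in F \setminus D$, and suppose no spanning circuit of $F$ contains $e$. For each $d \in D$, the set $(D \setminus \{d\}) \cup \{e\}$ has rank exactly one less than its size, so it contains a unique circuit $C_d$; this $C_d$ must pass through $e$ and, by assumption, satisfies $|C_d| < |D|$. All the $C_d$ share $e$, so by the hypothesis they form a chain under closure; let $C_{d_0}$ be one whose closure $F^* := \cl(C_{d_0}) \subsetneq F$ is maximal. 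A short rank count forces $F^* \cap D = C_{d_0} \setminus \{e\}$, so some $d^* \in D$ with $d^* \neq d_0$ lies outside $F^*$. Strong circuit elimination applied to $C_{d_0}$ and $D$ at any $y \in C_{d_0} \cap D$ then yields a circuit passing through $d^*$ inside $(D \setminus \{y\}) \cup \{e\}$; but the unique circuit in that rank-deficient set is $C_y \subseteq F^*$, which cannot contain $d^*$, a contradiction.
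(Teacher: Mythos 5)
The paper does not prove Theorem~\ref{Converse}; it is quoted from the authors' earlier work \cite{OurPaper}, so there is no in-paper argument to compare against. Your proof is correct and self-contained, and it follows the natural (and, as far as the cited source goes, essentially canonical) route: for necessity you extract from each circuit $C_i$ a set $A_i\in\A$ with $C_i\subseteq A_i$, $|C_i|=c(A_i)+1$ and $r(A_i)=c(A_i)$, so that $A_i\subseteq\cl(C_i)$ and laminarity of $\A$ finishes the job; for sufficiency you present $M$ by its Hamiltonian flats with capacities equal to their ranks. The one genuinely substantive step, your sub-lemma that every non-loop of a Hamiltonian flat lies on a spanning circuit of that flat, is argued correctly: the fundamental circuits $C_d$ of $e$ with respect to the bases $D\setminus\{d\}$ all contain $e$, hence their closures form a chain by hypothesis, the maximal closure $F^*$ meets $D$ in exactly $C_{d_0}\setminus\{e\}$ by the rank count, and strong circuit elimination toward an element of $D\setminus F^*$ lands in some $(D\setminus\{y\})\cup\{e\}$, whose unique circuit $C_y$ lies in $F^*$ --- a contradiction. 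The only informal point is the treatment of loops; since every flat contains $\cl(\emptyset)$, the Hamiltonian flats of a matroid with loops can pairwise intersect without forming chains, so your reduction to the loopless restriction (adjoining capacity-$0$ singletons for the loops) is genuinely needed and should be stated as a separate first step rather than an aside, but it is routine and does not affect correctness.
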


Using circuit elimination, it can quickly be shown that we get a similar description in terms of Hamiltonian flats.

\begin{corollary}
\label{HamConverse}
A matroid is laminar if and only if, for every $1$-element independent set $X$, the Hamiltonian flats containing $X$ form a chain under inclusion.
\end{corollary}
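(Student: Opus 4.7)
My plan is to use Theorem~\ref{Converse} as the engine and translate between circuits and Hamiltonian flats via the correspondence $C\leftrightarrow\cl(C)$: every circuit $C$ determines the Hamiltonian flat $\cl(C)$, and conversely every Hamiltonian flat is $\cl(C)$ for some spanning circuit $C$. Under this bridge, the hypothesis $|C_1\cap C_2|\ge1$ corresponds (up to the loop case) to two Hamiltonian flats sharing a non-loop element, and the conclusion $C_i\subseteq\cl(C_{3-i})$ corresponds to $\cl(C_i)\subseteq\cl(C_{3-i})$.

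For the ``Hamiltonian-chain implies laminar'' direction I would take circuits $C_1,C_2$ with a common element $x$. If $x$ is a loop, then minimality of circuits forces $C_1=C_2=\{x\}$ and the Theorem~\ref{Converse} condition is trivially satisfied. Otherwise $\{x\}$ is independent and $\cl(C_1),\cl(C_2)$ are Hamiltonian flats containing $x$, hence comparable by hypothesis; since $C_i\subseteq\cl(C_i)$, this gives $C_1\subseteq\cl(C_2)$ or $C_2\subseteq\cl(C_1)$, so $M$ is laminar by Theorem~\ref{Converse}.

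For the converse I would argue by contradiction. Suppose $M$ is laminar and $F_1,F_2$ are Hamiltonian flats containing a non-loop $x$ with $F_1\not\subseteq F_2$ and $F_2\not\subseteq F_1$; pick $a\in F_1\setminus F_2$ and $b\in F_2\setminus F_1$ (both necessarily non-loops, since loops lie in every flat). If I can produce circuits $D_a\subseteq F_1$ containing $\{a,x\}$ and $D_b\subseteq F_2$ containing $\{b,x\}$, then Theorem~\ref{Converse} applied to $D_a,D_b$ (which share $x$) yields $D_a\subseteq\cl(D_b)\subseteq F_2$ or $D_b\subseteq\cl(D_a)\subseteq F_1$, each contradicting the choice of $a$ or $b$.

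The principal obstacle is producing those two circuits; this is where the ``using circuit elimination'' remark comes in. The underlying lemma is that, in a Hamiltonian flat $F$, any two non-loops $u,v\in F$ lie on a common circuit contained in $F$---equivalently, since $M|F$ has a spanning circuit, $M|F$ is a connected matroid. To prove this directly, I would start from a spanning circuit $C$ of $F$, use fundamental circuits to place $u$ and $v$ on circuits inside $C\cup\{u\}$ and $C\cup\{v\}$ respectively, and then apply strong circuit elimination at a common element of $C$ to amalgamate these into a single circuit through both $u$ and $v$, inside $C\cup\{u,v\}\subseteq F$.
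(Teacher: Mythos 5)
Your overall strategy---reducing the corollary to Theorem~\ref{Converse} via the correspondence between circuits and their closures---is exactly what the paper intends (it gives no written proof, only the remark that the statement follows ``using circuit elimination''), and both directions are set up correctly: the loop case is handled properly, loops cannot occur as your witnesses $a$ and $b$ since loops lie in every flat, and the contradiction scheme works once you have circuits $D_a$ and $D_b$ through $\{a,x\}$ and $\{b,x\}$ inside $F_1$ and $F_2$ respectively.

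The one step that fails as written is the final amalgamation. If $C$ is a spanning circuit of the Hamiltonian flat $F$ and $C_u\subseteq C\cup\{u\}$, $C_v\subseteq C\cup\{v\}$ are the circuits you obtain through $u$ and $v$, these need not share any element of $C$: take $C$ a $4$-circuit with $u$ parallel to one element of $C$ and $v$ parallel to a different one, so that $C_u$ and $C_v$ are disjoint $2$-circuits and no elimination between them is possible. Even when $C_u\cap C_v\neq\emptyset$, strong circuit elimination at $e\in C_u\cap C_v$ only guarantees a circuit through one prescribed element of $(C_u\cup C_v)-e$; it gives no control over whether that circuit also contains the other of $u$ and $v$. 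The lemma you need is nevertheless true and standard: every non-loop of $F$ lies on a circuit meeting $C$, so the non-loop elements of $M|F$ form a single connected component, and in a connected matroid any two elements lie on a common circuit (see \cite{James}). Citing that fact, or running the usual transitivity argument for the relation ``lies on a common circuit,'' closes the gap; a single application of the elimination axiom does not. (A small side remark: your parenthetical claim that $M|F$ itself is connected is false when $F$ contains loops, which is another reason to phrase the lemma only for non-loops, as your main argument in fact does.)
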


In light of these results, for any non-negative integer $k$, we define a matroid $M$ to be {\it $k$-closure-laminar} if, for any $k$-element independent subset $X$ of $E(M)$, the Hamiltonian flats of $M$ containing $X$ form a chain under inclusion. We say that $M$ is {\em $k$-laminar} if, for any two circuits $C_1$ and $C_2$ of $M$ with $|C_1\cap C_2|\geq k$, either $C_1\subseteq \cl(C_2)$ or $C_1\subseteq \cl(C_2)$. The following 
observation is  straightforward.

\begin{lemma}
\label{kCL}
A matroid $M$ is $k$-closure-laminar if and only if, whenever $C_1$ and $C_2$ are circuits of $M$ with $r\big(\cl(C_1)\cap \cl(C_2)\big)\geq k$, either $C_1\subseteq \cl(C_2)$, or $C_2\subseteq \cl(C_1)$.
\end{lemma}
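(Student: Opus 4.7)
The plan is to prove both directions by exploiting the one-to-one correspondence between Hamiltonian flats and closures of circuits: a flat $F$ is Hamiltonian precisely when $F = \cl(C)$ for some circuit $C$. Once this correspondence is in hand, the statement essentially asks us to trade the hypothesis ``$X$ is a $k$-element independent subset of a Hamiltonian flat'' for the hypothesis ``the intersection of the two flats has rank at least $k$.''

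For the forward direction, I would start with circuits $C_1$ and $C_2$ satisfying $r\bigl(\cl(C_1)\cap \cl(C_2)\bigr)\geq k$. Since the intersection of two flats is a flat, I can choose a $k$-element independent subset $X$ of $\cl(C_1)\cap \cl(C_2)$. Both $\cl(C_1)$ and $\cl(C_2)$ are Hamiltonian flats containing $X$, so the $k$-closure-laminar hypothesis forces one to contain the other; without loss of generality $\cl(C_1) \subseteq \cl(C_2)$, and then $C_1 \subseteq \cl(C_1) \subseteq \cl(C_2)$, as required.

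For the converse, I would take a $k$-element independent set $X$ and two Hamiltonian flats $F_1, F_2$ containing $X$, and write $F_i = \cl(C_i)$ for circuits $C_i$. Since $X \subseteq F_1 \cap F_2 = \cl(C_1)\cap\cl(C_2)$ and $X$ is independent of size $k$, we have $r\bigl(\cl(C_1)\cap\cl(C_2)\bigr)\geq k$. The hypothesis then gives $C_1\subseteq \cl(C_2)$ or $C_2\subseteq \cl(C_1)$; taking closures and using idempotence of the closure operator yields $F_1\subseteq F_2$ or $F_2\subseteq F_1$, so the Hamiltonian flats containing $X$ indeed form a chain.

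There is no real obstacle here; the only small point that needs attention is the standard fact that a flat of rank at least $k$ contains a $k$-element independent set, which is used in the forward direction to produce $X$ from the rank hypothesis.
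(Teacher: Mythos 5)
Your proof is correct and is exactly the straightforward argument the paper has in mind (the paper omits the proof, calling the lemma a straightforward observation): the key point in both directions is that the Hamiltonian flats are precisely the closures of circuits, together with the fact that a set of rank at least $k$ contains a $k$-element independent subset. Nothing further is needed.
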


Observe that the class of nested matroids coincides with the classes of $0$-laminar matroids and   0-closure-laminar matroids, while the class of laminar matroids coincides with the classes of $1$-laminar matroids and   $1$-closure-laminar matroids. It is easy to see that  $k$-closure-laminar matroids are also $k$-laminar. For $k\geq 2$, consider the matroid that is obtained by taking the parallel connection of a $(k+1)$-element circuit $C$  and a triangle  and then taking the parallel connection of the resulting matroid with a triangle along a different element of $C$. The resulting matroid is $k$-laminar, but not $k$-closure-laminar. Thus, for all $k\geq2$, the class of $k$-laminar matroids strictly contains the class of $k$-closure-laminar matroids. 
Our hope is that, for small values of $k$, the classes of $k$-laminar and $k$-closure laminar matroids will enjoy some of the computational advantages of laminar matroids. 

It is not hard to show that the class of $k$-laminar matroids is minor-closed. This implies the previously known fact that the class of $k$-closure-laminar matroids is minor-closed for $k\in\{0,1\}$. We show that it is also minor-closed for $k\in\{2,3\}$. Somewhat surprisingly, for all $k \ge 4$, the class of $k$-closure-laminar matroids is not minor-closed. This is shown in Section~\ref{BR}. In Section~\ref{HGLM}, we prove our 
main results, excluded-minor characterizations of the classes of 2-laminar matroids and 2-closure-laminar matroids. 
In Section~\ref{BS}, we   consider the intersection of the  classes of $k$-laminar and $k$-closure laminar matroids with other well-known classes of matroids. In particular, we show that these intersections with the class of paving matroids coincide. Moreover, although all 
  nested and laminar matroids are representable,  we note that, for all $k\geq2$, neither of the classes of  $k$-laminar and $k$-closure-laminar matroids is contained in the class of representable matroids.

\section{Preliminaries}
\label{BR}

In this section, we establish some basic properties of  $k$-laminar and $k$-closure-laminar matroids. 
The first result summarizes some of these properties. Its straightforward proof is omitted. 

\begin{proposition}
\label{baby}
Let $M$ be a matroid and $k$ be a non-negative integer.
\begin{itemize}
\item[(i)] If $M$ is $k$-closure-laminar, then $M$ is $k$-laminar. 
\item[(ii)] If $M$ is $k$-closure-laminar, then $M$ is $(k+1)$-closure-laminar.
\item[(iii)]  If $M$ is $k$-laminar, then $M$ is $(k+1)$-laminar.
\item[(iv)] $M$ is $k$-closure-laminar if and only if, whenever $C_1$ and $C_2$ are non-spanning circuits  of $M$ with $r\big(\cl(C_1)\cap \cl(C_2)\big)\geq k$, either $C_1\subseteq \cl(C_2)$, or $C_2\subseteq \cl(C_1)$.
\item[(v)]  $M$ is $k$-laminar if and only if, whenever $C_1$ and $C_2$ are non-spanning circuits  of $M$ with $|C_1 \cap C_2|\geq k$, either $C_1\subseteq \cl(C_2)$ or $C_2\subseteq \cl(C_1)$.
\item[(vi)] If $M$ has at most one non-spanning circuit, then $M$ is $k$-laminar and $k$-closure-laminar.
\end{itemize}
\end{proposition}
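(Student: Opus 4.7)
The plan is to handle the six parts by reducing everything to Lemma~\ref{kCL} together with one elementary fact: if $C_1$ and $C_2$ are distinct circuits, then $C_1 \cap C_2$ is a proper subset of each, hence independent, so $r(C_1 \cap C_2) = |C_1 \cap C_2|$, and since $C_1 \cap C_2 \subseteq \cl(C_1) \cap \cl(C_2)$ we obtain $r(\cl(C_1) \cap \cl(C_2)) \geq |C_1 \cap C_2|$.

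For (i), I would assume $M$ is $k$-closure-laminar and take circuits $C_1, C_2$ with $|C_1 \cap C_2| \geq k$. If $C_1 = C_2$, the conclusion is trivial; otherwise the inequality $r(\cl(C_1) \cap \cl(C_2)) \geq |C_1 \cap C_2| \geq k$ combined with Lemma~\ref{kCL} yields $C_1 \subseteq \cl(C_2)$ or $C_2 \subseteq \cl(C_1)$. Parts (ii) and (iii) are immediate: strengthening the numerical hypothesis from $\geq k$ to $\geq k+1$ only restricts the pairs $(C_1,C_2)$ that must be checked, so the defining conditions weaken as $k$ increases.

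For (iv) and (v) the forward directions are obvious, so I would only argue the converses. Suppose the displayed condition holds for all pairs of non-spanning circuits, and let $C_1, C_2$ be arbitrary circuits satisfying the relevant hypothesis ($r(\cl(C_1) \cap \cl(C_2)) \geq k$ for (iv), $|C_1 \cap C_2| \geq k$ for (v)). If both are non-spanning, the hypothesis applies directly. Otherwise, say $C_1$ is spanning; then $\cl(C_1) = E(M)$, so $C_2 \subseteq \cl(C_1)$ holds automatically. This handles both parts simultaneously.

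Finally, (vi) follows from (iv) and (v): if $M$ has at most one non-spanning circuit, then there are no two distinct non-spanning circuits to which the hypotheses of (iv) or (v) could apply, and the case $C_1 = C_2$ is trivial, so both conditions hold vacuously. There is no serious obstacle in any part; the proof is essentially a bookkeeping exercise around Lemma~\ref{kCL} and the independence of $C_1 \cap C_2$, which is why the authors omit it.
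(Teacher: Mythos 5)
Your proof is correct and, since the paper explicitly omits this proof as straightforward, it supplies exactly the intended bookkeeping: reducing (i) to Lemma~\ref{kCL} via the independence of $C_1\cap C_2$ for distinct circuits, observing that (ii) and (iii) only shrink the set of pairs to be checked, and disposing of spanning circuits in (iv)--(vi) because their closure is $E(M)$. No gaps.
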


Clearly, for all $k$, the classes of $k$-laminar and $k$-closure-laminar matroids are closed under deletion. Next, we investigate   contractions of members of these classes. We omit the routine proof of the following.

\begin{lemma}
The class of $k$-laminar matroids is minor-closed.
\end{lemma}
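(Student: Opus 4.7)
The plan is to handle deletion and contraction separately, since the class being minor-closed is equivalent to being closed under both single-element deletion and single-element contraction. Deletion is essentially immediate: the circuits of $M \setminus e$ are exactly the circuits of $M$ avoiding $e$, and for $X \subseteq E(M) - e$ one has $\cl_{M\setminus e}(X) = \cl_M(X) \setminus \{e\}$. So if $C_1,C_2$ are circuits of $M\setminus e$ with $|C_1\cap C_2|\geq k$, they are already circuits of $M$ with this property, and $k$-laminarity of $M$ gives, say, $C_1 \subseteq \cl_M(C_2)$; since $e\notin C_1$, this already sits inside $\cl_{M\setminus e}(C_2)$.

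The substantive case is contraction. Take $e\in E(M)$, where we may assume $e$ is not a loop (otherwise $M/e = M\setminus e$), and let $C_1',C_2'$ be circuits of $M/e$ with $|C_1'\cap C_2'|\geq k$. The tool I would use is the standard lifting fact: each circuit $C_i'$ of $M/e$ is either itself a circuit of $M$ or becomes one upon adjoining $e$. So for each $i$ choose a lift $C_i \in \{C_i',\, C_i'\cup\{e\}\}$ that is a circuit of $M$. Since $C_1'\cap C_2' \subseteq C_1\cap C_2$, we still have $|C_1\cap C_2|\geq k$, and $k$-laminarity of $M$ yields, without loss of generality, $C_1\subseteq \cl_M(C_2)$.

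The last step is to translate this conclusion back to $M/e$ via the identity $\cl_{M/e}(X) = \cl_M(X\cup\{e\}) \setminus \{e\}$ for $X\subseteq E(M)-e$. Since $C_2 \subseteq C_2'\cup\{e\}$, monotonicity of closure gives $\cl_M(C_2) \subseteq \cl_M(C_2'\cup\{e\})$, hence $C_1 \subseteq \cl_M(C_2'\cup\{e\})$. Removing $e$ then yields $C_1' = C_1\setminus\{e\} \subseteq \cl_{M/e}(C_2')$, which is what we need. There is no real obstacle; the only points demanding care are the lifting of circuits through a contraction (to ensure the intersection size is preserved) and the bookkeeping relating $\cl_M$ with $\cl_{M/e}$, both entirely standard. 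This is presumably why the authors call the proof routine.
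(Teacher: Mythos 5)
Your proof is correct, and since the paper omits its own proof as ``routine,'' your argument is exactly the standard one the authors had in mind: deletion is immediate, and for contraction you lift each circuit of $M/e$ to a circuit of $M$ (either $C'$ or $C'\cup e$), note the intersection can only grow, apply $k$-laminarity in $M$, and push the conclusion back down via $\cl_{M/e}(X)=\cl_M(X\cup e)-e$. All the supporting facts you invoke are standard and correctly applied.
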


As we will see, the class of $k$-closure-laminar matroids is not closed under contraction when $k \ge 4$. 
The next lemma will be useful in proving that the classes of $2$-closure-laminar and $3$-closure-laminar matroids are minor-closed. 

\begin{lemma}
\label{HamCir}
Let $C$ be a circuit of a $k$-laminar matroid $M$ such that $|C| \ge 2k-1$. If $e \in E(M) - \cl(C)$ and $r(\cl(C \cup e) - \cl(C)) \ge 2$, then 
$\cl(C\cup e)$ 
 is a Hamiltonian flat of $M$.
\end{lemma}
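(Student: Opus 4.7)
The plan is to exhibit a circuit $D$ with $\cl(D) = \cl(C \cup e)$, which makes $\cl(C \cup e)$ a Hamiltonian flat. Since $e \notin \cl(C)$ and $C$ is a circuit, $\cl(C \cup e)$ has rank $|C|$, so the sought circuit must have size $|C|+1$. Using the rank-two hypothesis I first choose $f \in \cl(C \cup e) - \cl(C)$ with $\{e,f\}$ independent; such an $f$ exists because $e$ is not a loop, so not every element of that difference set can be parallel to $e$.

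For each $g \in C$, the set $(C-g) \cup \{e\}$ is a basis of $\cl(C \cup e)$. Let $D_g$ denote the fundamental circuit of $f$ with respect to this basis. Then $D_g \subseteq (C - g) \cup \{e,f\}$, $f \in D_g$, and $e \in D_g$ (otherwise $D_g - f \subseteq C$, forcing $f \in \cl(C)$, a contradiction). Fix some $g_0 \in C$ and set $D := D_{g_0}$. The key dichotomy is whether or not $|D| \geq k + 2$.

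If $|D| \geq k + 2$, then $|D \cap C| = |D| - 2 \geq k$, so the $k$-laminar hypothesis applied to $D$ and $C$, together with the fact that $e \in D$ precludes $D \subseteq \cl(C)$, forces $C \subseteq \cl(D)$. This gives $\cl(D) = \cl(C \cup e)$, and since $D$ is a circuit with $r(\cl(D)) = |C|$, we get $|D| = |C| + 1$; so $D$ is a spanning circuit of $\cl(C \cup e)$ and we are done.

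The main obstacle is the case $|D| \leq k+1$. Here I would choose $g' \in D \cap C$ (nonempty, since $\{e,f\}$ being independent forces $|D| \geq 3$) and consider $D' := D_{g'}$, which likewise contains $\{e,f\}$ and lies in $(C - g') \cup \{e,f\}$. Because $g' \in D$ but $g' \notin D'$, these circuits are distinct, and strong circuit elimination with $x = e$ and $y = g'$ produces a circuit $C^*$ with $g' \in C^* \subseteq (D \cup D') - e \subseteq C \cup \{f\}$. Since $f \notin \cl(C)$ rules out $f \in C^*$, we have $C^* \subseteq C$, so the minimality of the circuit $C$ forces $C^* = C$. Thus $C \subseteq (D \cup D') - \{e,f\}$. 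If also $|D'| \leq k+1$, then
\[
|C| \;\leq\; |D \cup D'| - 2 \;\leq\; |D| + |D'| - |D \cap D'| - 2 \;\leq\; 2(k+1) - 2 - 2 \;=\; 2k - 2,
\]
contradicting $|C| \geq 2k - 1$. Hence at least one of $D, D'$ satisfies $|D_{\bullet}| \geq k+2$, putting us in the good case of the previous paragraph and completing the proof.
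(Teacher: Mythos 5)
Your proposal is correct and follows essentially the same strategy as the paper's proof: both produce two circuits through $\{e,f\}$ contained in $C\cup\{e,f\}$ whose union covers $C$, then use $|C|\ge 2k-1$ to find one meeting $C$ in at least $k$ elements and apply $k$-laminarity. The only cosmetic differences are that you obtain the second circuit as a fundamental circuit with respect to a different basis (rather than by circuit elimination) and organize the counting as an explicit case split.
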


\begin{proof} Take an element $f$ of  $\cl(C \cup e) - (\cl(C) \cup \cl(\{e\}))$. Then $M$ has a circuit $D$ such that $\{e,f\} \subseteq D \subseteq C \cup \{e,f\}$. As $f\not\in\cl(\{e\})$, we may choose an element $d$ in $D - \{e,f\}$. By circuit elimination, $M$ has a circuit $D'$ such that 
$f \in D' \subseteq (C \cup D) - d$. Then $e \in D'$ as $f\not\in\cl(C)$. Applying circuit elimination again gives a circuit $C'$ contained in $(D \cup D') - e$. As $f\not\in \cl(C)$, it follows that $C' = C$. Hence $D' \supseteq C - D$. As $|C| \ge 2k-1$, either $|D\cap C|$ or $|D'\cap C|$ is at least $k$. Since neither $D$ nor $D'$ is contained in $\cl(C)$, it follows that $C$ is contained in $\cl(D)$ or $\cl(D')$. Thus 
$D$ or $D'$ is a spanning circuit of $\cl(C\cup e)$, so this flat is Hamiltonian.
\end{proof} 

\begin{theorem}
The classes of $2$-closure-laminar and $3$-closure-laminar matroids are minor-closed.
\end{theorem}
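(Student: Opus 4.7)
The plan is this. Since $k$-closure-laminar matroids are closed under deletion, it suffices to prove closure under the contraction of a single non-loop $e$; contracting a loop is the same as deleting it. Assuming $M$ is $k$-closure-laminar with $k\in\{2,3\}$, let $D_1,D_2$ be circuits of $M/e$ satisfying $r_{M/e}(\cl_{M/e}(D_1)\cap \cl_{M/e}(D_2))\geq k$; the aim is to show that $D_1\subseteq \cl_{M/e}(D_2)$ or $D_2\subseteq \cl_{M/e}(D_1)$. A first reduction: if $|D_i|=k+1$ for some $i$, then $r_{M/e}(\cl_{M/e}(D_i))=k$ matches the intersection rank, which forces $\cl_{M/e}(D_i)\subseteq \cl_{M/e}(D_j)$ and we are done. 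So assume $|D_i|\geq k+2$, which for $k\in\{2,3\}$ gives $|D_i|\geq 2k-1$, making Lemma~\ref{HamCir} applicable.

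Each $D_i$ corresponds to a circuit of $M$ in one of two ways: either $D_i$ is itself a circuit of $M$ with $e\notin \cl_M(D_i)$ (call this case (a)), or $D_i\cup e$ is a circuit of $M$ (case (b)). Within case (a) further distinguish non-degenerate ($r_M(\cl_M(D_i\cup e)-\cl_M(D_i))\geq 2$) from degenerate (this rank is $1$, so the difference lies in the parallel class of $e$). For each $D_i$ select a circuit $C_i$ of $M$: in case (b), $C_i=D_i\cup e$; in case (a) non-degenerate, Lemma~\ref{HamCir} certifies $\cl_M(D_i\cup e)$ as a Hamiltonian flat of $M$ and $C_i$ is taken to be a spanning circuit of it; in case (a) degenerate, $C_i=D_i$.

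The heart of the argument is verifying that $r_M(\cl_M(C_1)\cap \cl_M(C_2))\geq k$, for then the $k$-closure-laminar property of $M$ applied to $C_1,C_2$ yields $C_1\subseteq \cl_M(C_2)$ or $C_2\subseteq \cl_M(C_1)$. When neither $D_i$ is (a)-degenerate, both $\cl_M(C_i)=\cl_M(D_i\cup e)$ contain $e$, and $\cl_M(C_1)\cap \cl_M(C_2)=(\cl_{M/e}(D_1)\cap \cl_{M/e}(D_2))\cup\{e\}$ has $M$-rank at least $k+1$. In subcases where some $D_i$ is (a)-degenerate, the crucial observation is that $e\notin \cl_M(D_i)$ prevents any element parallel to $e$ from lying in $\cl_M(D_i)$; thus the ``extra'' elements of $\cl_M(D_j\cup e)-\cl_M(D_j)$ either drop out of the $M$-intersection with $\cl_M(D_i)$ or correspond to new loops of $M/e$ that contribute no $M/e$-rank. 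A short case check then transfers the $M/e$-rank lower bound to the required $M$-rank lower bound.

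Finally, suppose $C_1\subseteq \cl_M(C_2)$; then $D_1\subseteq \cl_M(C_1)\subseteq \cl_M(C_2)$, and since $D_1\subseteq E(M)-e$, one gets $D_1\subseteq \cl_M(C_2)-e\subseteq \cl_{M/e}(D_2)$. In mixed configurations where $e\in \cl_M(C_2)$ but $e\notin \cl_M(C_1)$ (occurring exactly when $D_1$ is (a)-degenerate and $D_2$ is not), the alternative $C_2\subseteq \cl_M(C_1)$ would force $e\in \cl_M(C_2)\subseteq \cl_M(C_1)=\cl_M(D_1)$, a contradiction; so only the desired direction survives. The main obstacle is the case-by-case rank verification in the subcases where degeneracy occurs, which rests on a careful accounting of how the parallel class of $e$ interacts with $\cl_M(D_1)$ and $\cl_M(D_2)$.
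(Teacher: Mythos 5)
Your proposal is correct and follows essentially the same route as the paper: reduce to a single non-loop contraction, dispose of the case $|D_i|=k+1$, use $|D_i|\ge k+2\ge 2k-1$ (valid only for $k\le 3$) to apply Lemma~\ref{HamCir}, replace each circuit of $M/e$ by a circuit of $M$ whose closure agrees with $\cl_M(D_i\cup e)$ off the parallel class of $e$, and transfer the rank condition both ways. The paper packages your three-way case split and the rank bookkeeping into the single identity $\cl_M(C_i\cup e)-\cl(\{e\})=\cl_M(D_i)-\cl(\{e\})$, but the argument is the same.
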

\begin{proof}
For some $k$ in $\{2,3\}$, 
let $e$ be an element of a $k$-closure-laminar matroid $M$, and let $C_1$ and $C_2$ be distinct circuits in $M/e$ with $r_{M/e}(\cl_{M/e}(C_1)\cap\cl_{M/e}(C_2))\geq k$. We aim to show that $\cl_{M/e}(C_1)\subseteq\cl_{M/e}(C_2)$ or $\cl_{M/e}(C_2)\subseteq\cl_{M/e}(C_1)$. This is certainly true if $r_M(C_1) = k$ or $r_M(C_2) = k$, so assume each of $|C_1|$ and $|C_2|$ is at least  $k+2$. As $k \in \{2,3\}$, it follows that $|C_i| \ge 2k-1$ for each $i$.

\begin{sublemma}
\label{subex}
For each $i$ in $\{1,2\}$, there is a circuit $D_i$ of $M$ such that $\cl_M(C_i \cup e) - \cl(\{e\})= \cl_M(D_i) - \cl(\{e\})$.
\end{sublemma}

To see this, first note that $C_i$ or $C_i \cup e$ is a circuit of $M$. In the latter case, we take $D_i = C_i\cup e$. In the former case, by Lemma~\ref{HamCir}, the result is immediate unless $\cl(C_i \cup e) = \cl(C_i) \cup \cl(\{e\})$, in which case we can take $D_i = C_i$.  Thus \ref{subex} holds. 

Now $r((\cl_{M}(C_1 \cup e)\cap\cl_{M}(C_2 \cup e)))\geq k+1$ as $r(\cl_{M/e}(C_1)\cap\cl_{M/e}(C_2))\geq k$. Hence, by \ref{subex}, 
$r(\cl_{M}(D_1)\cap\cl_{M}(D_2))\geq k$. Thus, for some $\{i,j\} = \{1,2\}$, we see that $\cl_M(D_i) \subseteq \cl_M(D_j)$. 
Hence
$\cl_M(C_i \cup e) - \cl_M(\{e\}) \subseteq \cl_M(C_j \cup e) - \cl_M(\{e\})$, so 
$\cl_{M/e}(C_i) - \cl_M(\{e\}) \subseteq \cl_{M/e}(C_j) - \cl_M(\{e\}).$ As each element of $\cl_M(\{e\}) - e$ is a loop in $M/e$, we deduce that $\cl_{M/e}(C_i)  \subseteq \cl_{M/e}(C_j).$ Thus the theorem holds.
\end{proof}

\begin{theorem}
\label{notk}
For all $k\geq 4$, the class of $k$-closure-laminar matroids is not minor-closed.
\end{theorem}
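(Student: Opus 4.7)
The plan is to construct, for each $k \ge 4$, an explicit matroid $M_k$ together with an element $e \in E(M_k)$ such that $M_k$ is $k$-closure-laminar but $M_k/e$ is not. The approach exploits the sharpness of the hypothesis $|C| \ge 2k-1$ in Lemma~\ref{HamCir}: when $k \ge 4$, a $(k{+}2)$-circuit $C$ has size strictly below $2k-1$, so the flat $\cl(C \cup e)$ can fail to be Hamiltonian even while $r(\cl(C \cup e) - \cl(C)) \ge 2$. This is precisely the loophole in the argument used for the $k \in \{2,3\}$ case, and what I want to exploit.

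Concretely, I would take $M_k$ of rank $k+2$ realized linearly over $\mathbb{Q}$, with ground set consisting of a $(k{+}2)$-circuit $C = \{c_1, \ldots, c_{k+2}\}$, an element $e \notin \cl(C)$, and an additional element $f \in \cl(C \cup e) - \cl(C) - \cl(\{e\})$ chosen to sit inside sufficiently many rank-$(k{+}1)$ sub-flats of the form $\cl((C - \{c_i, c_j\}) \cup e)$ so that no $(k{+}3)$-subset of $\cl(C \cup e)$ can be a circuit, i.e., $\cl(C\cup e)$ is non-Hamiltonian. The steps are then: (i) enumerate the circuits of $M_k$ (essentially $C$, a small circuit $F$ of size at most $k+1$ containing $\{e,f\}$, and any circuit obtained by elimination between $C$ and $F$); (ii) verify $k$-closure-laminarity of $M_k$ by checking all circuit pairs, using Proposition~\ref{baby}(vi) to dispense with any circuit whose closure has rank less than $k$; and (iii) exhibit in $M_k/e$ two circuits whose closures meet in rank $k$ but for which neither contains the other.

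The main obstacle is step (ii): in generic configurations the derived circuit $G$ obtained by eliminating a common element of $C$ and $F$ has size exactly $k{+}2$ and shares $k$ elements with $C$, and then $\cl(G)$ need not nest with $\cl(C)$, producing a $k$-CL violation already in $M_k$. Overcoming this is the heart of the construction: one must select $f$ in a non-generic position that both makes $\cl(C \cup e)$ non-Hamiltonian and simultaneously forces any derived circuit $G$ either to have $\cl(G) \cap \cl(C)$ of rank strictly less than $k$ or to nest with $\cl(C)$ inside $M_k$. For $k = 4$ this reduces to a finite verification on a small linear matroid; the same pattern, scaled by one extra coordinate per unit of $k$, extends to every $k \ge 4$, producing the required counterexample and completing the proof.
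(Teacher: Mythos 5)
Your proposal correctly identifies the relevant loophole (the hypothesis $|C| \ge 2k-1$ in Lemma~\ref{HamCir} fails for a $(k+2)$-circuit once $k \ge 4$), but it does not contain a proof: the entire content of the theorem is the existence of a specific matroid $M$ with an element $e$ such that $M$ is $k$-closure-laminar while $M/e$ is not, and your step (ii) explicitly defers what you yourself call ``the heart of the construction'' --- the choice of a non-generic position for $f$ that simultaneously destroys the Hamiltonicity of $\cl(C\cup e)$ and avoids creating a violation already in $M_k$ --- without exhibiting such a position, proving that one exists, or carrying out the promised finite verification even for $k=4$.

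Worse, the parameters you fix rule the construction out before it starts. Your $M_k$ has rank $k+2$ on the ground set $C \cup \{e,f\}$, so $M_k/e$ has rank $k+1$. A violation of $k$-closure-laminarity requires two \emph{non-spanning} circuits $D_1,D_2$ with $r(\cl(D_1)\cap\cl(D_2)) \ge k$ and neither contained in the closure of the other (Proposition~\ref{baby}(iv)); if $r(\cl(D_1)) \le k$, then $\cl(D_1)\cap\cl(D_2)$ is a flat of rank at least $k$ sitting inside the flat $\cl(D_1)$ of rank at most $k$, which forces $\cl(D_1)\cap\cl(D_2) = \cl(D_1)$ and hence $D_1 \subseteq \cl(D_2)$. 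So both $D_i$ must have rank at least $k+1$ yet be non-spanning, meaning the contracted matroid must have rank at least $k+2$ and the original matroid rank at least $k+3$, with enough elements to carry two such circuits in non-nested position. No choice of $f$ can rescue a rank-$(k+2)$ example on $k+4$ elements: its contraction by $e$ is automatically $k$-closure-laminar. The paper's actual construction is of an entirely different scale: a rank-$(2k-1)$ matroid on $3k-2$ elements, specified by prescribing its lattice of cyclic flats and verifying the Bonin--de Mier axioms (Theorem~\ref{bdm}), where axiom (Z3) is exactly what requires $k \ge 4$; the offending circuits in $M/e$ are the $(2k-2)$-element sets $C_a$ and $C_b$, whose closures in $M/e$ acquire $b_1$ and $a_1$ respectively and meet in a rank-$k$ set. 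You would need to produce, and verify, a construction of at least that size.
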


The proof of this theorem will use Bonin and de Mier's  characterization of matroids in terms of their collections of cyclic flats \cite[Theorem 3.2]{BDMCF}. 

\begin{theorem}
\label{bdm}
Let ${\mathscr Z}$
be a collection of subsets of a set $E$
and let $r$
be an integer-valued function on ${\mathscr Z}$. There is a matroid for which ${\mathscr Z}$
is the collection of cyclic flats and $r$
is the rank function restricted to the sets in ${\mathscr Z}$ if and only if
\begin{itemize}
\item[]{\rm (Z0)} ${\mathscr Z}$ is a lattice under inclusion;
\item[]{\rm (Z1)} $r(0_{\mathscr Z}) = 0;$
\item[]{\rm (Z2)}  $0 < r(Y) - r(X) < |Y - X|$ for all sets $X,Y$ in ${\mathscr Z}$ with $X \subsetneqq Y$; and
\item[]{\rm (Z3)} for all sets $X,Y$ in ${\mathscr Z}$,
$$r(X) + r(Y) \ge r(X\vee Y) + r(X \wedge Y) + |(X \cap Y) - (X \wedge Y)|.$$
\end{itemize}
\end{theorem}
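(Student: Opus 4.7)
The theorem is an if-and-only-if, so I treat the two directions separately. For necessity, assuming $\mathscr{Z}$ is the collection of cyclic flats of a matroid $M$ and $r$ is the restriction of the rank function, each axiom follows from standard matroid facts. Axiom (Z0) holds because, for cyclic flats $X$ and $Y$, the set $\cl_M(X\cup Y)$ is the smallest cyclic flat containing both (so it serves as $X\vee Y$), while the union of all circuits of $M|(X\cap Y)$ is the largest cyclic flat contained in both (so it serves as $X\wedge Y$). Axiom (Z1) is immediate from $0_{\mathscr{Z}}$ being the set of loops of $M$. For (Z2), strict containment of flats forces a strict rank increase, and if $Y$ is cyclic with $Y\supsetneq X$, then $Y$ contains a circuit that meets $Y-X$, which shows $Y-X$ is dependent in $M/X$ and hence $r(Y)-r(X) < |Y-X|$. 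Finally, (Z3) follows from submodularity of the matroid rank together with the identity $r(X\cap Y) = r(X\wedge Y) + |(X\cap Y) - (X\wedge Y)|$; this identity holds because the elements of $(X\cap Y) - (X\wedge Y)$ are exactly the coloops of $M|(X\cap Y)$ and so contribute their full cardinality to its rank.

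For sufficiency, given $(\mathscr{Z},r)$ satisfying (Z0)--(Z3), the natural construction is to extend $r$ to a function $\rho\colon 2^E\to\mathbb{Z}$ by
\[ \rho(A) = \min\{r(Z) + |A-Z| : Z \in \mathscr{Z}\}. \]
The plan is to verify that $\rho$ is the rank function of a matroid on $E$. That $\rho(\emptyset)=0$ (take $Z=0_{\mathscr{Z}}$ and apply (Z1)), $\rho(A)\le |A|$, and $\rho$ is non-decreasing are all routine once one checks, using (Z1) and (Z2), that $r$ is non-negative on $\mathscr{Z}$. The decisive step is submodularity. Given minimizers $Z_A, Z_B$ for $\rho(A), \rho(B)$, one tests $\rho(A\cup B)$ at $Z_A\vee Z_B$ and $\rho(A\cap B)$ at $Z_A\wedge Z_B$, applies (Z3) to the pair $(Z_A, Z_B)$, and is reduced to verifying the set-theoretic inequality
\[ |A-Z_A| + |B-Z_B| + |(Z_A\cap Z_B) - (Z_A\wedge Z_B)| \ge |(A\cup B)-(Z_A\vee Z_B)| + |(A\cap B)-(Z_A\wedge Z_B)|. \]
This yields to an element-by-element case analysis on membership in $A, B, Z_A, Z_B, Z_A\vee Z_B, Z_A\wedge Z_B$; the fact that $Z_A\cup Z_B\subseteq Z_A\vee Z_B$ and $Z_A\wedge Z_B\subseteq Z_A\cap Z_B$ is what makes every case balance.

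Finally, I identify the cyclic flats of the matroid $M$ determined by $\rho$ with $\mathscr{Z}$, and confirm $\rho|_{\mathscr{Z}} = r$. The inequality $\rho(Z)\le r(Z)$ for $Z\in\mathscr{Z}$ is immediate from the definition; for the reverse, if $Z_0$ is any minimizer, then applying (Z3) with $(Z,Z_0)$ and invoking (Z2) twice on the chains $Z\wedge Z_0 \subseteq Z$ and $Z_0\subseteq Z\vee Z_0$ forces $r(Z_0) + |Z-Z_0|\ge r(Z)$. To see that each $Z\in\mathscr{Z}$ is a flat of $M$, one shows that adjoining any $e\notin Z$ strictly increases $\rho$, again using (Z2) to bound the minimizer's contribution; cyclicity follows since, by (Z2), removing any element of $Z$ cannot drop the rank below $r(Z)$, forcing every element of $Z$ to lie on a circuit within $Z$. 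Conversely, any cyclic flat $F$ of $M$ has its minimizer $Z_0$ in $\mathscr{Z}$ equal to $F$ itself, placing $F$ in $\mathscr{Z}$. The main obstacle throughout is the submodularity step: axiom (Z3) has been calibrated precisely so that the set-theoretic accounting above balances, and getting this accounting exactly right is the heart of the proof.
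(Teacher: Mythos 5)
This theorem is not proved in the paper at all: it is quoted verbatim from Bonin and de Mier \cite[Theorem 3.2]{BDMCF}, so there is no in-paper proof to compare against. Your outline is essentially the published proof of that result: the necessity checks are the standard ones (in particular, your identifications of $X\vee Y$ with $\cl_M(X\cup Y)$ and of $X\wedge Y$ with the union of the circuits in $X\cap Y$, and the coloop identity $r(X\cap Y)=r(X\wedge Y)+|(X\cap Y)-(X\wedge Y)|$, are all correct), and the sufficiency direction via $\rho(A)=\min\{r(Z)+|A-Z|:Z\in\mathscr{Z}\}$ is exactly Bonin--de Mier's construction. Be aware, though, that as written the hardest parts are still announced rather than executed: the element-by-element verification of submodularity, and especially the identification of the cyclic flats of the resulting matroid with $\mathscr{Z}$. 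For the latter, note that showing each $Z\in\mathscr{Z}$ is cyclic requires $\rho(Z-e)\ge r(Z)$ for every $e\in Z$, which needs the strict inequality $r(Z_0)+|Z-Z_0|>r(Z)$ for minimizers $Z_0$ with $e\notin Z_0$; this is genuinely stronger than the inequality $r(Z_0)+|Z-Z_0|\ge r(Z)$ you use to get $\rho(Z)=r(Z)$, so that step still needs an argument.
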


\begin{proof}[Proof of Theorem~\ref{notk}.]
Let $A=\{a_1, a_2, \dots, a_{k-1}\}$, $B=\{b_1, b_2, \dots, b_{k-1}\}$, $C=\{c_1, c_2, \dots, c_{k-1}\}$, $D=\{e, a_1, b_1, c_1\}$, $C_a= A\cup C$, and $C_b= B\cup C$. Let $E = A \cup B \cup C \cup D$ and let ${\mathscr Z}$ be the following collection of subsets of $E$ having the specified ranks. 
\begin{center}
\begin{tabular}{| c | c|}
\hline
Rank $t$  &  Members of ${\mathscr Z}$ of rank $t$                       \\ \hline 
0      & $\emptyset$                                        \\ \hline 
$k$  & $C\triangle D$, $A\triangle D$, and $B\triangle D$ \\ \hline 
$2k-3$ & $C_a$  and $C_b$                                   \\ \hline 
$2k-2$ & $C_a\cup D$  and $C_b\cup D$                       \\ \hline 
$2k-1$ & $E$                                           \\ \hline 
\end{tabular}
\end{center}

We will show that ${\mathscr Z}$ is the collection of cyclic flats of a matroid $M$ on $E$. We then show that $M$ is $k$-closure-laminar  but that $M/e$ is not.

We note that $C\triangle D$, $A\triangle D$, $B\triangle D$, $C_a$, and $C_b$ form an antichain, that $C_a\cup D$ contains $C\triangle D$, $A\triangle D$, and $C_a$, but not $B\triangle D$ or $C_b$, and that $C_b\cup D$ contains $C\triangle D$, $B\triangle D$, and $C_b$, but not $A\triangle D$ or $C_a$.  This gives us that   ${\mathscr Z}$ is a lattice obeying (Z1) and which we can quickly check obeys (Z2). There are fifteen  incomparable pairs of members of  ${\mathscr Z}$, seven of which satisfy $r(X) + r(Y) = r(X\vee Y) + r(X \wedge Y) + |(X \cap Y) - (X \wedge Y)|$. For the remaining eight, the inequality $r(X) + r(Y) \ge r(X\vee Y) + r(X \wedge Y) + |(X \cap Y) - (X \wedge Y)|$ reduces to the inequality $k\geq 4$. Hence  ${\mathscr Z}$ obeys (Z3) for all $k\geq 4$, so $M$ is  a matroid. As noted in \cite{BDMCF}, its circuits are the minimal subsets $S$ of $E$ such that ${\mathscr Z}$ contains an element $Z$ containing $S$ with $|S| = r(Z) + 1$. 

To show that $M$ is $k$-closure-laminar, we note that $C_a\cup D$ is non-Hamiltonian for $2=|C_a\cup D|-r(C_a\cup D)$, yet there is no  element of $C_a\cup D$ that is in all three non-spanning circuits of $M|(C_a\cup D)$. By symmetry, $C_b\cup D$ is non-Hamiltonian. All of the other cyclic flats of $M$ are Hamiltonian.  This gives us ten pairs $(X,Y)$ of incomparable Hamiltonian flats for which to check that $r(X\cap Y)\leq k-1$. For all such pairs either $|X\cap Y|=2$ or $|X\cap Y|=k-1$, so $M$ is indeed $k$-closure-laminar. To see that $M/e$ is not $k$-closure-laminar, we note that $\cl_{M/e}(C_a)=C_a\cup b_1$ and $\cl_{M/e}(C_b)=C_b\cup a_1$. Then $\cl_{M/e}(C_a)\cap\cl_{M/e}(C_b)=C\cup \{a_1,b_1\}$, which has rank $k$ in $M/e$ as $(C \triangle D) - e$ is the only circuit of $M/e$ contained in it. Therefore $M/e$ is  not  $k$-closure-laminar as neither $\cl_{M/e}(C_a)$ nor $\cl_{M/e}(C_b)$ is contained in the other.
\end{proof}

\section{Excluded Minors}
\label{HGLM}

We now note some excluded minors for the classes of $k$-laminar and $k$-closure-laminar matroids. For $n\geq k+2$, let $M_n(k)$ be the truncation to rank $n$ of the cycle matroid of the graph consisting  of two vertices that are joined by three internally disjoint paths $P$, $X_1$, and $X_2$ of lengths $k$, $n-k$, and $n-k$, respectively. In particular,  $M_{4}(2) \cong M(K_{2,3})$. Observe that, when $k = 0$, the path $P$ has length $0$ so its endpoints are equal. Thus $M_n(0)$ is the truncation to rank $n$ of the direct sum of two $n$-circuits. 
Let $M^-(K_{2,3})$ be the unique matroid that is obtained by relaxing a circuit-hyperplane of $M(K_{2,3})$. For $n\geq k+3\geq 5$, let $N_n(k)$ be the truncation to rank $n$ of the graphic matroid that is obtained by attaching   two $(n-k)$-circuits to  distinct elements of a  $(k+2)$-circuit via parallel connection. For $n\geq k+2 \ge 4$, let $P_n(k)$ be the truncation to rank $n$ of the graphic matroid that is obtained by attaching   two $(n-k+1)$-circuits to  distinct elements of a  $(k+1)$-circuit via parallel connection. Thus $P_n(k)$ is a single-element contraction of $N_{n+1}(k)$. 
Moreover, $P_4(2)$ is isomorphic to the matroid that is obtained by deleting  a rim element from a rank-$4$ wheel. 

\begin{lemma}
\label{mnk}
For all $n\geq 4$, the matroid $M_n(k)$ is an excluded minor for the classes of $k$-laminar matroids and $k$-closure-laminar matroids.
\end{lemma}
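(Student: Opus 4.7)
The plan is to show (i) that $M_n(k)$ itself is not $k$-laminar (hence not $k$-closure-laminar), and (ii) that every proper minor is $k$-closure-laminar (hence also $k$-laminar by Proposition~\ref{baby}(i)). Part (ii) will rely on the facts that deletion commutes with truncation, that $T^n(M)/e = T^{n-1}(M/e)$, and that any non-spanning circuit of a rank-$N$ truncation has size at most $N$ and so is already a circuit of the untruncated matroid.

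For (i), I take $C_1 = P \cup X_1$ and $C_2 = P \cup X_2$; each is an $n$-cycle of the underlying graph $G$, hence a circuit of $M(G)$, and since $|C_i| = n$ each remains a circuit of $M_n(k) = T^n(M(G))$. Their intersection is $P$, of size $k$. In the graphic matroid $M(G)$ the closure of an $n$-cycle $C$ consists of all edges both of whose endpoints lie in $V(C)$, and since $n - k \geq 2$ the internal vertices of $X_{3-i}$ lie outside $V(C_i)$, giving $\cl_{M(G)}(C_i) = C_i$. Truncation does not enlarge this closure, because in $M(G)$ any edge outside $C_i$ raises the rank from $n-1$ to $n$, which survives the truncation cap. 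Thus neither $C_1 \subseteq \cl(C_2)$ nor $C_2 \subseteq \cl(C_1)$ holds, and $M_n(k)$ is not $k$-laminar.

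For (ii), using the $X_1 \leftrightarrow X_2$ symmetry, there are four cases. Deleting $e \in P$ leaves $X_1 \cup X_2$ as the unique cycle of $G \setminus e$, while deleting $e \in X_1$ leaves $P \cup X_2$ as the unique cycle; in each case $M(G \setminus e)$ has a single circuit, so the truncation has at most one non-spanning circuit, and Proposition~\ref{baby}(vi) applies. Contracting $e \in X_1$ gives a matroid whose non-spanning circuits are $P \cup X_1'$ and, only when $n \leq 2k$, also $X_1' \cup X_2$; these meet in $X_1'$ of rank $n - k - 1 < k$, so the only potentially problematic pair fails the $k$-closure-laminar hypothesis and the condition is vacuous. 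Contracting $e \in P$ gives $M_n(k)/e \cong M_{n-1}(k-1)$, since the contracted graph is a theta-graph with path lengths $k-1, n-k, n-k$; its non-spanning circuits are $P' \cup X_1$, $P' \cup X_2$, and (only when $n \leq 2k - 1$) $X_1 \cup X_2$. The pairwise intersections of these are $P'$, $X_1$, and $X_2$, with ranks $k - 1$, $n - k$, and $n - k$; in each subcase the relevant rank is strictly less than $k$, so once again the hypothesis of the $k$-closure-laminar condition is never triggered.

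The principal obstacle is confirming that the closures computed in $M(G)$, $M(G \setminus e)$, or $M(G/e)$ coincide with those in the corresponding truncation for every circuit under consideration. This reduction follows from $r_{T^N}(S) = \min(r_M(S), N)$ together with the size bounds in each case; with that in hand each closure calculation becomes a routine vertex-incidence check in the underlying graph, and the verification of the $k$-closure-laminar condition reduces to comparing a handful of numerical ranks against $k$.
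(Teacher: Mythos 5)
Your proof is correct and follows essentially the same route as the paper: exhibit the two $n$-circuits $P\cup X_1$ and $P\cup X_2$ to show $M_n(k)$ is not $k$-laminar, then check that each single-element deletion or contraction either has at most one non-spanning circuit (so Proposition~\ref{baby}(vi) applies) or has non-spanning circuits whose closures pairwise meet in rank less than $k$. If anything, your accounting is slightly more careful than the paper's, which asserts ``exactly one'' (respectively ``only two'') non-spanning circuits after contracting an element of $X_i$ (respectively $P$) and thereby overlooks the extra non-spanning circuit coming from $X_1\cup X_2$ when $n\le 2k$ (respectively $n\le 2k-1$); your rank computations show this extra circuit never triggers the $k$-closure-laminar hypothesis, so the conclusion is unaffected.
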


\begin{proof}
We may assume that $k\geq2$, as the lemma holds for $k=0$ and for $k=1$ by results in \cite{Don} and \cite{OurPaper}.
Clearly $M_n(k)$ is  not $k$-laminar so is not $k$-closure-laminar.
If we delete an element of $M_n(k)$, then we get a matroid with at most one non-spanning circuit. By  Proposition~\ref{baby}(vi), such a matroid is   $k$-closure-laminar and hence is  $k$-laminar.  If we contract an element of $P$ from $M_n(k)$, we get a matroid that is $k$-closure-laminar since in it the closures of the  only two non-spanning circuits   meet in $k-1$ elements. Instead, if we contract an element of $X_1$ or $X_2$, we again get a matroid with exactly one non-spanning circuit. Thus the lemma holds. 
\end{proof}

Similar arguments give the  following result.

\begin{lemma}
\label{therest} $ $

\begin{itemize}
\item[(i)] The matroid $M^-(K_{2,3})$ is an excluded minor for the classes of $2$-laminar and $2$-closure-laminar matroids.
\item[(ii)] For all $n\ge k+3 \geq 5$, the matroid $N_n(k)$ is an excluded minor for the class of $k$-laminar matroids.
\item[(iii)] For all $n\ge k+2 \geq 4$, the matroid $P_n(k)$ is an excluded minor for the class of $k$-closure-laminar matroids.
\end{itemize}
\end{lemma}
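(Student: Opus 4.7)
My plan is to mirror the proof of Lemma~\ref{mnk}: for each matroid in the statement, I would (a) exhibit a pair of circuits witnessing failure of the relevant laminar property, and (b) verify that every single-element deletion and contraction lies in the class, typically by reducing to Proposition~\ref{baby}(vi) or to a direct pairwise inspection of the surviving non-spanning circuits.

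For (i), $M^-(K_{2,3})$ has rank $4$ on six elements. Since $M(K_{2,3})$ has three $4$-element circuit-hyperplanes, relaxing one leaves two $4$-element circuit-hyperplanes $C_1, C_2$ of $M^-(K_{2,3})$ meeting in exactly two elements (because $|C_1| + |C_2| - |C_1 \cup C_2| \le 4 + 4 - 6 = 2$), with neither contained in the closure of the other. Hence $M^-(K_{2,3})$ is not $2$-laminar, so not $2$-closure-laminar. I would then use the symmetries preserving the relaxed hyperplane to reduce the minor check to a small number of cases and verify in each that the resulting five-element matroid has at most one non-spanning circuit, so Proposition~\ref{baby}(vi) applies.

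For (ii), let $Y$ denote the $(k+2)$-cycle of the underlying graph of $N_n(k)$, with attachment edges $e_1, e_2$, and let $X_1, X_2$ be the attached $(n-k)$-cycles. Then $X_1 \triangle Y$ and $X_2 \triangle Y$ are graphic cycles of size $n$, hence circuits of the truncation, and their intersection is $Y \setminus \{e_1, e_2\}$ of size $k$; each one is a non-spanning hyperplane, and neither is contained in the closure of the other. So $N_n(k)$ is not $k$-laminar. For the minor check, one works through the element orbits under the graph automorphism group: deleting any element either reduces to at most one non-spanning circuit of relevance (so Proposition~\ref{baby}(vi) applies) or leaves the only surviving witness pair with intersection of size less than $k$; contracting an element of $Y \setminus \{e_1, e_2\}$ drops the intersection of the witness pair below $k$; and contracting $e_i$ or an internal element of some $X_i$ destroys one of the two witness circuits. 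For (iii), the analogue circuits $X_1 \triangle Y'$ and $X_2 \triangle Y'$ in $P_n(k)$, with $Y'$ now the $(k+1)$-cycle, have closures $X_1 \cup Y'$ and $X_2 \cup Y'$ meeting in $Y'$, which has rank $k$; this witnesses the failure of the $k$-closure-laminar property, and the minor check proceeds analogously, exploiting the identification $P_n(k) = N_{n+1}(k)/v$ so that many minors pull back to minors already handled in (ii).

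The main obstacle is the case-heavy minor analysis in (ii) and (iii); this is delicate especially in (iii) when $k \ge 4$, since the class of $k$-closure-laminar matroids is then not minor-closed (Theorem~\ref{notk}), so I cannot argue that every proper minor is in the class by appealing to minor-closure of single-element minors. Instead one must show that each single-element minor has a sufficiently restricted structure (at most one non-spanning circuit, or a closure intersection strictly below rank $k$) for the relevant property to be preserved under all further deletions and contractions.
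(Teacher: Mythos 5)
The paper gives no proof of this lemma beyond the remark that ``similar arguments'' to those for Lemma~\ref{mnk} apply, and your proposal is exactly that argument: exhibit the witnessing pair of circuits for each family and check single-element minors via Proposition~\ref{baby}(vi) or a pairwise inspection of the surviving non-spanning circuits, with your flagging of the $k\ge 4$ non-minor-closedness subtlety in (iii) being a genuine point the paper glosses over. One small correction: in part (i), contracting an element of $C_1\cap C_2$ leaves \emph{two} non-spanning triangles meeting in a single point, so there you need the ``closures of the two surviving circuits meet in rank at most one'' observation (as in the contraction-of-$P$ case of Lemma~\ref{mnk}) rather than Proposition~\ref{baby}(vi), and likewise in (ii) some deletions leave several non-spanning circuits whose relevant pairs satisfy the containment condition rather than having small intersection.
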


The main result of this paper is that we have now identified all of the excluded minors for the classes of 2-laminar and 2-closure-laminar matroids. We will use the following basic results. 
We omit the elementary proof of the second one.

\begin{lemma}
\label{MatroidsThatBehaveLikeGraphsBehaveLikeGraphs}
Let $C$ be a circuit in  a matroid $M$ and let $x$ and $y$ be non-loop elements of $\cl(C)-C$. Suppose that $C_x$, $C_x'$, $C_y$, and $C_y'$ are circuits such that $C_x\cup C_x'=C\cup x$ and $C_y\cup C_y'=C\cup y$, where $C_x\cap C_x'=\{x\}$, and $C_y\cap C_y'=\{y\}$, while $C_x\triangle\{x,y\}=C_y$. Then $x$ and $y$ are parallel.
\end{lemma}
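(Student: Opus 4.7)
My plan is to identify the combinatorial structure that the four hypothesized circuits force on $C$, and then extract the parallel pair $\{x,y\}$ by a single application of submodularity of the rank function.

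The first step is to set $A = C_x - x$ and $B = C_x' - x$, and observe that $(A,B)$ is a partition of $C$; this is immediate from $C_x \cup C_x' = C \cup x$ and $C_x \cap C_x' = \{x\}$. I then decode the hypothesis $C_x \triangle \{x,y\} = C_y$: since $x \in C_x$ and $y \in C_y$, this forces $y \notin C_x$ and $x \notin C_y$, and hence $C_y = A \cup y$. The analogous partition $(C_y - y, C_y' - y)$ of $C$ must then equal $(A,B)$, giving $C_y' = B \cup y$. Thus all four of $A \cup x$, $B \cup x$, $A \cup y$, and $B \cup y$ are circuits of $M$.

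Next, since $A$ and $B$ are proper subsets of the circuit $C$, each is independent, and the four circuit relations above give $\{x,y\} \subseteq \cl(A) \cap \cl(B)$. Consequently $r(A \cup \{x,y\}) = |A|$ and $r(B \cup \{x,y\}) = |B|$, while $r(A \cup B \cup \{x,y\}) = r(C) = |A| + |B| - 1$ because $\{x,y\} \subseteq \cl(C)$.

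Finally, I apply submodularity to the pair $A \cup \{x,y\}$ and $B \cup \{x,y\}$. Their intersection is exactly $\{x,y\}$ (since $A \cap B = \emptyset$ and $x, y \notin A \cup B$) and their union is $A \cup B \cup \{x,y\}$, so
$$|A| + |B| \;\ge\; (|A| + |B| - 1) + r(\{x,y\}),$$
forcing $r(\{x,y\}) \le 1$. Since neither $x$ nor $y$ is a loop, $\{x,y\}$ must then be a circuit, so $x$ and $y$ are parallel. The only subtlety is the bookkeeping in the first step, where the symmetric-difference hypothesis has to be unpacked to pin down $C_y$ and $C_y'$ precisely; after that, the rank estimate is a one-liner.
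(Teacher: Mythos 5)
Your proof is correct, but it takes a genuinely different route from the one in the paper. You first unpack the hypotheses to show that, with $A=C_x-x$ and $B=C_x'-x$, the pair $(A,B)$ partitions $C$ and all four of $A\cup x$, $B\cup x$, $A\cup y$, $B\cup y$ are circuits; this bookkeeping is accurate (in particular, $y\notin C_x$ is forced because $y\in C_y=C_x\triangle\{x,y\}$, and $A,B$ are nonempty proper subsets of $C$ because $x$ is not a loop). From there you finish with one application of submodularity to $A\cup\{x,y\}$ and $B\cup\{x,y\}$, whose intersection is exactly $\{x,y\}$, obtaining $r(\{x,y\})\le 1$ and hence a parallel pair. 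The paper instead stays entirely inside the circuit axioms: it first observes that $C$, $C_y$, and $C_y'$ are the only circuits contained in $C\cup y$, then applies circuit elimination twice to produce circuits $D$ and $D'$ each containing $\{x,y\}$ with $D\cap D'\subseteq\{x,y\}$, and derives a contradiction from $D\ne D'$ by eliminating $x$ to land a forbidden circuit inside $C\cup y$. Your rank-function argument is shorter and arguably more transparent once the structure of the four circuits is pinned down, while the paper's argument is self-contained at the level of circuit exchange and matches the style of the surrounding lemmas. Both are valid; no gap.
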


\begin{proof} Note that the hypotheses imply that $C_y$, $C_y'$, and $C$ are the only circuits contained in $C\cup y$, as any other such circuit must contain both $C-C_y$ and $C-C_y'$. Clearly  $C_x-x=C_y-y$ and   $C_x'-x=C_y'-y$.
Choose elements $e$ and $e'$ with $e\in C_x-x$ and $e'\in C_x'-x$. We use circuit elimination to get circuits $D$ and $D'$ with $D\subseteq (C_x\cup C_y)-e$ and $D'\subseteq (C_x'\cup C_y')-e'$. Of necessity, $\{x,y\}$ is contained in both $D$ and $D'$. We also note that $D\cap D'$ is contained in 
$$((C_x\cup C_y)-e)\cap((C'_x\cup C'_y)-e')=(C_x\triangle\{e,y\})\cap(C_x'\triangle\{e',y\})=\{x,y\}.$$ If $D\not=D'$, then elimination gives a circuit $D''\subseteq (D\cup D')-\{x\}\subseteq (C\cup y)-\{e,e'\}$. But then $D''$ is not equal to $C$, $C_y$, or $C_y'$, which are the only circuits contained in $C\cup y$. Thus $D=D'=\{x,y\}$.
\end{proof}

\begin{lemma}
\label{Obvious}
Let $C$ and $D$ be distinct circuits of a matroid $M$.
\begin{itemize}
\item[(i)] If $D\not\subseteq\cl(C)$, then $|D-\cl(C)| \ge 2$.
\item[(ii)] If $|D- C| = 1$ and $D'$ is a circuit contained in $C\cup D$ other than $C$ or $D$, then $D' \supseteq C - D$.
\end{itemize}
\end{lemma}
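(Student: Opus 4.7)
Both statements follow from elementary matroid properties, and I do not expect any substantive obstacle; indeed the excerpt labels the lemma suggestively and the text treats these as standard facts. The plan is simply to produce the two short arguments cleanly, part (i) via a closure-operator manipulation and part (ii) via a single application of weak circuit elimination.

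For part (i), I would argue by contradiction. Suppose $|D - \cl(C)| \le 1$; since $D \not\subseteq \cl(C)$, we must in fact have $|D - \cl(C)| = 1$, say $D - \cl(C) = \{e\}$. Then $D - e \subseteq \cl(C)$, so monotonicity and idempotence of closure give $\cl(D - e) \subseteq \cl(\cl(C)) = \cl(C)$. Because $D$ is a circuit, $D - e$ is independent and still spans $D$, so $e \in \cl(D - e) \subseteq \cl(C)$, contradicting $e \in D - \cl(C)$. This is the whole argument; the only ``step'' is recognising that the circuit property forces $e \in \cl(D-e)$.

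For part (ii), let $e$ be the unique element of $D - C$, so that $C \cup D = C \cup e$. The first sub-step is to show $e \in D'$: otherwise $D' \subseteq (C\cup e) - e = C$, and since both $D'$ and $C$ are circuits this forces $D' = C$, against the hypothesis. Now $D$ and $D'$ are distinct circuits sharing the element $e$, so weak circuit elimination produces a circuit $D''$ with $D'' \subseteq (D \cup D') - e \subseteq (C \cup e) - e = C$. Since $C$ is itself a circuit, $D'' = C$, whence $C \subseteq D \cup D'$ and therefore $C - D \subseteq D'$, as required. The only real thing to watch is the brief case analysis producing $e \in D'$; once that is in hand, the single circuit-elimination step closes the argument.
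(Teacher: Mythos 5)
Your proof is correct, and since the paper explicitly omits the proof of this lemma as elementary, there is no authorial argument to compare against; your two arguments (closure idempotence plus $e \in \cl(D-e)$ for part (i), and a single weak circuit elimination on $e \in D \cap D'$ followed by the observation that a circuit contained in the circuit $C$ must equal $C$ for part (ii)) are exactly the standard ones the authors clearly had in mind. No gaps.
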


\begin{lemma}
\label{common}
Let $M$ be an excluded minor for $\Ms$ where $\Ms$ is the class of $2$-laminar or $2$-closure-laminar matroids. Let $C_1$ and $C_2$ be circuits of $M$ neither of which is contained in the closure of the other such that $|C_1 \cap C_2| \ge 2$ when $\Ms$ is the class of $2$-laminar matroids while $r(\cl(C_1) \cap \cl(C_2)) \ge 2$ otherwise. Then 
\begin{itemize}
\item[(i)] $E(M) = \cl(C_1) \cup \cl(C_2)$; 
\item[(ii)] $M$ has $\cl(C_1)$ and $\cl(C_2)$ as hyperplanes so $|C_1| = |C_2|$; and 
\item[(iii)] if $C$ is a circuit of $M$ that meets both $C_1 - \cl(C_2)$ and $C_2 - \cl(C_1)$, then  either  $C$ is spanning, or $C$ contains $C_ 1 \bigtriangleup C_2$. 
\end{itemize}
\end{lemma}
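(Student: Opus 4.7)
The plan is to exploit that $M$ is an excluded minor for $\Ms$: for every $e\in E(M)$, both $M\setminus e$ and $M/e$ lie in $\Ms$, whereas the pair $(C_1,C_2)$ witnesses $M\notin\Ms$. So each minor operation must either destroy one of $C_1, C_2$, break the intersection hypothesis, or force comparability, and in each part we identify the configuration that leaves no such escape.

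For (i), suppose $e\in E(M)-(\cl(C_1)\cup\cl(C_2))$. In $M\setminus e$ both $C_1$ and $C_2$ remain circuits with $\cl_{M\setminus e}(C_i)=\cl_M(C_i)$, so the intersection hypothesis and incomparability both survive, contradicting $M\setminus e\in\Ms$.

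For (ii), by the symmetric roles of $C_1$ and $C_2$ it suffices to show $\cl(C_1)$ is a hyperplane; then $|C_1|-1=r(\cl(C_1))=r(M)-1=r(\cl(C_2))=|C_2|-1$ yields the size equality. Suppose $\cl(C_1)$ has corank at least two, so that there are $f,g\in E(M)-\cl(C_1)$ with $g\notin\cl(C_1\cup f)$; by (i) both lie in $\cl(C_2)$. If either $f$ or $g$ is not in $C_2$, deleting it keeps $C_1$ and $C_2$ as circuits and leaves $\cl(C_1)\cap\cl(C_2)$ unchanged (since the deleted element sits outside $\cl(C_1)$), preserving the bad pair and contradicting the excluded-minor hypothesis. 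Otherwise $\{f,g\}\subseteq C_2$, and I would contract $f$: then $C_1$ remains a circuit of $M/f$, and $C_2-f$ is a circuit of $M/f$ because $C_2$ is the only circuit of $M$ inside $C_2$. A short computation gives $|C_1\cap(C_2-f)|=|C_1\cap C_2|$ and, using $f\notin\cl(C_1)$, also $r_{M/f}(\cl_{M/f}(C_1)\cap\cl_{M/f}(C_2-f))\geq r(\cl(C_1)\cap\cl(C_2))$. Meanwhile $g\in C_2-\cl(C_1\cup f)$ prevents $C_2-f\subseteq\cl_{M/f}(C_1)$ and $C_1\not\subseteq\cl_M(C_2)$ prevents $C_1\subseteq\cl_{M/f}(C_2-f)$, so $(C_1,C_2-f)$ is a bad pair in $M/f\in\Ms$, a contradiction.

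For (iii), assume $C$ is non-spanning and omits some $z\in C_1\triangle C_2$; by symmetry take $z\in C_1-C_2$. In $M\setminus z$, $C_1$ dies but $C_2$ and $C$ persist, and $x\in C-\cl(C_2)$ gives $C\not\subseteq\cl_{M\setminus z}(C_2)$. By (ii) $\cl(C_2)$ is a hyperplane, so $C_2\subseteq\cl(C)$ would force $\cl(C)=\cl(C_2)$, placing $x\in\cl(C_2)$ — a contradiction — hence $C_2\not\subseteq\cl_{M\setminus z}(C)$. The element $y\in C\cap C_2$ gives one intersection; to secure the second (an element for $2$-laminar, a unit of rank for $2$-closure-laminar), I would apply strong circuit elimination to $C$ and $C_1$ at $x$ (with $y\in C-C_1$) to produce a circuit $D\subseteq(C\cup C_1)-x$ through $y$, and then eliminate against $C_2$ to manufacture the required second intersection inside $\cl(C)\cap\cl(C_2)$, exhibiting a bad pair in $M\setminus z\in\Ms$. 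The main obstacle lies in the rank-tracking during the contraction step of (ii) and the circuit-elimination chain of (iii), where the specific intersection parameter (size vs.\ closure rank) must be shown to survive the minor operation, drawing on Lemma~\ref{Obvious} and on the incomparability hypothesis.
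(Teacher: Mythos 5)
Parts (i) and (ii) of your argument are correct. Part (i) is the paper's proof. For part (ii) the paper takes a shorter route: it contracts an element $e$ of $C_2-\cl(C_1)$, checks that $(C_1,C_2-e)$ still satisfies the relevant intersection hypothesis in $M/e$, rules out $C_1\subseteq \cl_{M/e}(C_2-e)$, and so gets $C_2\subseteq \cl_M(C_1\cup e)$, whence $\cl_M(C_1\cup e)=E(M)$ by (i) and $\cl(C_1)$ is a hyperplane. Your corank-two contradiction, with its delete-or-contract case split, is longer, but the rank and incomparability bookkeeping you describe does go through, so this part is a legitimate alternative.

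The gap is in (iii), and it is exactly the step you flag as the main obstacle. After deleting $z$, you need the pair $(C,C_2)$ to satisfy the defining intersection condition, but the hypothesis hands you only one element of $C\cap C_2$, and strong circuit elimination on $C$ and $C_1$ gives no control over where the resulting circuit meets $C_2$; that sketch does not produce the second element. The missing ingredient is not elimination but Lemma~\ref{Obvious}(i) together with one more excluded-minor deletion: since $C\not\subseteq\cl(C_1)$, Lemma~\ref{Obvious}(i) gives $|C-\cl(C_1)|\ge 2$, and $E(M)-\cl(C_1)\subseteq C_2$ (if some $e$ lay in $E(M)-(\cl(C_1)\cup C_2)$, then $e\notin C_1\cup C_2$ and $e\notin\cl(C_1)\cap\cl(C_2)$, so deleting $e$ would preserve the bad pair $(C_1,C_2)$, contradicting $M\ba e\in\Ms$). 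Hence $|C\cap C_2|\ge 2$, and since $|C|\ge 3$ this also gives $r(\cl(C)\cap\cl(C_2))\ge 2$. The paper runs the symmetric version: $|C-\cl(C_2)|\ge2$ yields $|C\cap C_1|\ge2$, the hyperplane property from (ii) gives incomparability of $\cl(C)$ and $\cl(C_1)$, and deleting any element of $E(M)-(C\cup C_1)$ yields a contradiction; thus $E(M)=C\cup C_1$ and, by symmetry, $E(M)=C\cup C_2$, which forces $C\supseteq C_1\bigtriangleup C_2$. With the intersection condition supplied as above, your deletion of $z\in(C_1\bigtriangleup C_2)-C$ closes the argument the same way.
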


\begin{proof}
If $f \in E(M) - ( \cl(C_1) \cup \cl(C_2))$, then $C_i \subseteq \cl_{M\ba f}(C_j)$   for some $\{i,j\} = \{1,2\}$. Thus $C_i \subseteq \cl_{M}(C_j)$; a contradiction. Hence (i) holds. 
Certainly $C_2 - \cl(C_1)$ contains an element $e$. As $e \not\in \cl(C_1)$, if $\{x,y\}$ is an independent subset of $\cl(C_1) \cap \cl(C_2)$, then $\{x,y\}$ is independent in $M/e$. It follows without difficulty that $M/e \in \Ms$ so either $\cl_{M/e}(C_2 - e) \supseteq C_1$ or $\cl_{M/e}(C_1) \supseteq C_2-e$. The former yields a contradiction. Hence $\cl_M(C_1 \cup e) \supseteq C_2$, so $\cl_M(C_1 \cup e) = E(M)$. Thus $\cl(C_1)$ is a hyperplane of $M$. By symmetry, so is $\cl(C_2)$. Hence $|C_1| = |C_2|$, so (ii) holds.

Now let $C$ be a circuit of $M$ that meets both $C_1 - \cl(C_2)$ and $C_2 - \cl(C_1)$. 
As $C -\cl(C_2)$ is non-empty, $|C- \cl(C_2)| \ge 2$, so $|C \cap C_1| \ge 2$. Suppose $C$ is non-spanning. As $\cl(C_1)$ is a hyperplane and $C$ meets $C_2 - \cl(C_1)$, it follows that  
$\cl(C_1) \not \supseteq \cl(C)$ and $\cl(C) \not \supseteq \cl(C_1)$. Since $|C \cap C_1| \ge 2$, if $E(M) - (C \cup C_1)$ contains an element $e$, then, as $M\ba e \in {\Ms}$, we get a contradiction. Therefore $E(M) = C \cup C_1$. By symmetry, $E(M) = C \cup C_2$. Thus $C$ contains $C_ 1 \bigtriangleup C_2$, so (iii) holds.
\end{proof}

\begin{theorem}
\label{EM2LM}
The excluded minors for the class of $2$-laminar matroids are \linebreak $M^-(K_{2,3})$, $M_n(2)$ for all $n\geq 4$, and $N_n(2)$ for all $n\geq5$.
\end{theorem}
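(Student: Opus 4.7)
In view of Lemmas~\ref{mnk} and~\ref{therest}(i), (ii), the forward direction is already done: each matroid on the list is an excluded minor for the class of $2$-laminar matroids. My plan is the converse. Let $M$ be an excluded minor. Since $M$ is not $2$-laminar, fix circuits $C_1, C_2$ with $|C_1 \cap C_2| \ge 2$ and neither contained in the closure of the other. Lemma~\ref{common} gives the basic frame: $E(M) = \cl(C_1) \cup \cl(C_2)$, each $\cl(C_i)$ is a hyperplane of $M$, and $|C_1| = |C_2| = n$, where $n := r(M)$.

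The first two reductions establish $|C_1 \cap C_2| = 2$ and $E(M) = C_1 \cup C_2$. If $|C_1 \cap C_2| \ge 3$, contracting any $f \in C_1 \cap C_2$ yields circuits $C_i - f$ of $M/f$ whose intersection still has size at least two; the identity $\cl_{M/f}(C_i - f) = \cl_M(C_i) - f$ shows neither lies in the closure of the other, contradicting $2$-laminarity of $M/f$. If $e \in E(M) \setminus (C_1 \cup C_2)$, then $C_1$ and $C_2$ remain circuits of $M \setminus e$ with the same obstruction, again contradicting the excluded-minor property. Writing $C_1 \cap C_2 = \{f_1, f_2\}$, $A = C_1 - C_2$, $B = C_2 - C_1$, and $S_i = \cl(C_i) - C_i$, these reductions yield $|E(M)| = 2n-2$, $S_1 \subseteq B$, and $S_2 \subseteq A$.

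Next I will show $|S_1|, |S_2| \le 1$ and rule out the asymmetric case $\{|S_1|, |S_2|\} = \{0,1\}$. Any two distinct elements of $S_1$ lie together in the circuit $C_2$ of size $n \ge 4$ and so cannot be parallel; applying Lemma~\ref{MatroidsThatBehaveLikeGraphsBehaveLikeGraphs} to fundamental circuits of such elements with respect to $(n-1)$-subsets of $C_1$, together with a contraction argument, will produce a proper minor of $M$ that still fails $2$-laminarity, a contradiction. Symmetrically for $S_2$. A short deletion argument excludes the mixed case.

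The final case split then identifies $M$. In Case~(a), $S_1 = S_2 = \varnothing$, so each $C_i$ is a circuit-hyperplane. Applying circuit elimination to remove $f_1$ from $C_1, C_2$ gives a circuit $C_3$ which, by Lemma~\ref{common}(iii), either contains $C_1 \triangle C_2$ or is spanning. Using Lemma~\ref{MatroidsThatBehaveLikeGraphsBehaveLikeGraphs} to forbid extraneous parallel pairs and Lemma~\ref{Obvious} to control sizes, $M$ matches $M_n(2)$ when $n \ge 5$, and either $M_4(2) = M(K_{2,3})$ or $M^-(K_{2,3})$ when $n = 4$, the last two distinguished by whether $C_1 \triangle C_2$ is a circuit or a basis. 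In Case~(b), $|S_1| = |S_2| = 1$: the two unique extra elements play the role of the bridging edges in the definition of $N_n(2)$, and the circuit structure then matches precisely, yielding $M \cong N_n(2)$. The principal obstacle is the bound $|S_i| \le 1$: ruling out thicker closures demands delicate use of Lemma~\ref{MatroidsThatBehaveLikeGraphsBehaveLikeGraphs} across many fundamental-circuit choices.
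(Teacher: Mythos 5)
Your outline reproduces the paper's strategy almost exactly: reduce to $|C_1\cap C_2|=2$ and $E(M)=C_1\cup C_2$, bound $|\cl(C_i)-C_i|$ by $1$, and then split into the cases $|\cl(C_i)-C_i|=0$ (giving $M(K_{2,3})$, $M^-(K_{2,3})$, or $M_n(2)$) and $|\cl(C_i)-C_i|=1$ (giving $N_n(2)$). The opening reductions and the final case identification are essentially right and match the paper's use of Lemma~\ref{common}. However, there is a genuine gap at the step you yourself flag as ``the principal obstacle'': the bound $|S_1|,|S_2|\le 1$ is asserted, not proved. Your one-sentence plan --- that two elements of $S_1$ are non-parallel and that Lemma~\ref{MatroidsThatBehaveLikeGraphsBehaveLikeGraphs} plus ``a contraction argument'' will ``produce a proper minor of $M$ that still fails $2$-laminarity'' --- does not describe a working mechanism. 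In the paper this step is the bulk of the proof: one first needs the structural sublemma that each $g\in\cl(C_1)-C_1$ determines circuits $G,G'$ with $G\cup G'=C_1\cup g$, $G\cap G'=\{g\}$, $\{a,b\}\subseteq G\subseteq\cl(C_2)$, and with $G$, $G'$, $C_1$ the \emph{only} circuits in $C_1\cup g$; then one shows the closures of $G_1,\dots,G_t$ form a chain, that $t=s$, that $G_1=\{g_1,a,b,f_1,\dots,f_s\}$ so $|G_1|=t+3$, and that Lemma~\ref{MatroidsThatBehaveLikeGraphsBehaveLikeGraphs} forces $|G_1|>|G_2|>\dots>|G_t|\ge 4$. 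Even that is not yet a contradiction for $t\ge 2$; the paper must additionally rule out $|G_2|=|G_1|-1$ by a strong circuit elimination argument, which is what finally forces $|G_t|\le 3$. The contradiction is a counting one, not the exhibition of a smaller non-$2$-laminar minor, so your proposed route is not merely unelaborated but pointed in a direction that does not obviously close.

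A secondary but related gap: your Case~(b) identification of $N_n(2)$ needs the uniqueness statements from that same structural sublemma (that $C_1$, $C_2$, $\{a,b,g,f\}$, $G'$, and $F'$ are the only non-spanning circuits, which in turn rests on knowing exactly which circuits live inside $C_1\cup g$ and $C_2\cup f$, and on Lemma~\ref{common}(iii) to force every other circuit to be spanning). Without establishing that inventory of circuits, ``the circuit structure then matches precisely'' is a conclusion, not an argument. The skeleton is correct and the cited lemmas are the right tools, but as written the proof is incomplete at its load-bearing step.
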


\begin{proof}
Suppose that $M$ is an excluded minor for the class of $2$-laminar matroids. Then $M$ has circuits $C_1$ and $C_2$ with $|C_1\cap C_2|\geq2$ such that neither $C_1$ nor $C_2$ is contained in the closure of the other. Moreover, $E(M)=C_1\cup C_2$ and $|C_1\cap C_2|=2$, otherwise we could delete an element of $E(M)=C_1\cup C_2$ or contract an element of $C_1\cap C_2$ and still get a non-2-laminar matroid.  Let $\{a,b\}=C_1\cap C_2$. Next we show the following.

\begin{sublemma}
\label{subsub}
If $g\in \cl(C_1)-C_1$, then there are circuits $G$ and $G'$ that meet in $\{g\}$ such that $G\cup G'=C_1\cup g$ and  
$\{a,b\} \subseteq G \subseteq \cl(C_2)$. Furthermore, $G$, $G'$, and $C_1$ are the only circuits contained in $C_1\cup g$ so $\cl(G) - C_2 = G - C_2$.
\end{sublemma}

As $g\in\cl(C_1)-C_1$, there are circuits $G$ and $G'$ with $g\in G\cap G'$ and $G\cup G'=C_1\cup g$. 
As $E(M) = C_1 \cup C_2$, we see  that $g\in C_2-C_1$. Since $C_2\not\subseteq \cl(C_1)$, there is an element $e$ in $C_2-\cl(C_1)$. Then $M\backslash e$ is 2-laminar. 
Suppose that both $G$ and $G'$ meet $\{a,b\}$. Then $|G\cap C_2|\geq 2$ and $|G'\cap C_2|\geq 2$.
Thus either $\cl(C_2)$ contains both $G$ and $G'$, or $C_2$ is contained in $\cl(G)$ or $\cl(G')$. In the former case, $C_1\subseteq\cl(C_2)$, while, in the latter case, $C_2\subseteq\cl(C_1)$. We deduce that we may assume that $\{a,b\}\subseteq G$ and neither $a$ nor $b$ is in $G'$. 
Now $C_2\cap G=\{a,b,g\}$. Then deleting  an element of $C_1-G$ shows that one of $G$ and $C_2$ is contained in the closure of the other. As $C_2\not\subseteq\cl(G)$, it follows that $G\subseteq \cl(C_2)$.

Since $C_1\not\subseteq\cl(C_2)$, we have at least two elements of $C_1$ that are not in $\cl(C_2)$ so $G'-G$ has at least two elements. Thus 
$\cl(G)\not\supseteq G'$.
By Lemma~\ref{Obvious}(ii), as  $G'$ avoids $\{a,b\}$, it follows that $a \not \in \cl(G')$. Hence $G\not\subseteq \cl(G')$.  
Since $e\in C_2-\cl(C_1)$ and $M\backslash e$ is 2-laminar, we deduce  that $G\cap G'=\{g\}$, and~\ref{subsub} holds.

As there are at least two elements in each of $C_2-\cl(C_1)$, $C_1-\cl(C_2)$, and $C_1\cap C_2$, it follows that $r(M)\geq 4$. Next we show

\begin{sublemma}
\label{l=1}
$|\cl(C_1)-C_1|=|\cl(C_2)-C_2|\leq1$.
\end{sublemma}

Suppose that $\cl(C_1)-C_1=\{g_1,g_2,\ldots, g_t\}$. For each $i$ in $\{1,2,\dots,t\}$, let $G_i$ and $G_i'$ be the associated circuits given by ~\ref{subsub} whose union is $C_1 \cup g_i$ where $\{a,b,g_i\}\subseteq G_i$ and  $G'_i =C_1\triangle G_i$. As $G_i$ and $G_j$ meet in at least two elements for distinct $i$ and $j$ in $\{1, 2, \ldots, t\}$, the closures of $G_1,G_2,\ldots, G_t$ form a chain under inclusion. Say  $\cl(G_1)\supseteq \cl(G_2)\supseteq\dots\supseteq\cl(G_t)$. Since $\cl(G_i)-C_2=G_i-C_2$, it follows that  $G_1-C_2\supseteq G_2-C_2\supseteq\dots \supseteq G_t-C_2$. Now let $\{f_1,f_2,\ldots,f_s\}=\cl(C_2)-C_2$. For each $f_i$, there are circuits $F_i$ and $F_i'$ whose union is $C_2 \cup  f_i$ such that 
 $\{a,b\}\subseteq F_i$  and $F_i' = C_2 \triangle F_i$. Moreover, we may assume that $\cl(F_1) \supseteq \cl(F_i)$ for all $i$. 
 
By \ref{subsub}, for all $i$,
$$F_i-\{a,b,f_i\}\subseteq  \cl(C_1)-C_1= \{g_1,g_2,\dots,g_t\} \subseteq \cl(G_1).$$ Thus $F_i - f_i \subseteq \cl(G_1)$ so   $f_i\in\cl(G_1)$. Hence, by \ref{subsub},  $f_i\in G_1$. Moreover, as $F_i\subseteq\{f_i,a,b,g_1,g_2,\dots,g_t\}$ and $\{a,b,g_1,g_2,\dots,g_t\}\subseteq\cl(G_1)$, we see that $\cl(F_i) \subseteq \cl(G_1)$.  Since $\{f_1,f_2,\ldots,f_s\} \subseteq G_1$, we deduce that  $G_1 = \{g_1,a,b,f_1,f_2,\ldots,f_s\}$.  As $\cl(F_1) \subseteq \cl(G_1)$, it follows by symmetry that   $\cl(F_1)=\cl(G_1)$. Moreover, symmetry also gives that 
$F_1 = \{g_1,a,b,f_1,f_2,\ldots,f_s\}$. 
Since  $G_1$ and $F_1$ are both circuits spanning the same set, they have the same cardinality, so 
$t=s$; that is, 
$$|\cl(C_1)-C_1|=|\cl(C_2)-C_2|.$$ 
By Lemma~\ref{MatroidsThatBehaveLikeGraphsBehaveLikeGraphs}, since $\{g_1,g_2,\ldots,g_t\}$ is independent, we get that $G_i-g_i\not=G_j-g_j$ for distinct $i$ and $j$. Thus 
$$t+3=|G_1|>|G_2|>\dots > |G_l|\geq4$$
 where the last inequality follows because $G_l$ is not a proper subset of $C_2$.

Now suppose that $|G_2|=|G_1|-1$  where $(G_1-g_1)-(G_2-g_2)= \{f_i\}$. Choose $e\in C_1-\cl(C_2)$. As $f_i \in G'_2 - G'_1$, strong circuit elimination on $G_1'$ and $G_2'$, both of which contain $e$, yields a circuit $D$ containing $f_i$ and avoiding $e$. Since $D$ avoids $\{a,b\}$, it follows that $\{g_1,g_2\} \subseteq D$. 
As $e \not\in C_2 \cup D$, we deduce that $D \subseteq \cl(C_2)$, otherwise we obtain the contradiction that 
$C_2 \subseteq \cl(D) \subseteq \cl(C_1)$. 
But $(G'_2 - g_2) -(G'_1 - g_1) = \{f_i\}$, so 
$D \subseteq G_2'\cup g_1$, and $(G_2'\cup g_1)\cap\cl(C_2)=\{f_i,g_1,g_2\}$. As $D \subseteq \cl(C_2)$, it follows that $D \subseteq \{f_i,g_1,g_2\}$. 
This is a contradiction to \ref{subsub} because $D\not\in\{F_i,F_i'\}$. We deduce that $|G_2| \le |G_1|-2$. Thus $|G_2| \le t+3 - 2 = t + 1$. Hence $|G_t| \le 3$; a contradiction. 
We conclude that \ref{l=1} holds.

By Lemma~\ref{common}, $\cl(C_1)$ and $\cl(C_2)$ are hyperplanes of $M$, so $|C_1| = |C_2|$.
Suppose that $|\cl(C_1)-C_1| = 0$. Then, by \ref{l=1}, $|\cl(C_2)-C_2| = 0$. Thus every circuit of $M$ other than $C_1$ or $C_2$ must meet both $C_1 - C_2$ and $C_2 - C_1$. Assume $M$ has such a circuit $C$ that is non-spanning. Then,   by Lemma~\ref{common}(iii),    $C_1 \bigtriangleup C_2 \subseteq C$. As $|C_1 \cap C_2| = 2$ but $C$ is non-spanning, it follows that $C = C_1 \bigtriangleup C_2$. Thus $r(C) = r(M) - 1$, so $r(M) - 1   = |C_1| + |C_2| - 5$. But $r(M) - 1 = r(C_1) = |C_1| - 1$. Hence $|C_2| = 4$ and, by symmetry, $|C_1| = 4$.   It follows easily that $M \cong M(K_{2,3}) \cong M_4(2)$. Now suppose that every circuit other than $C_1$ or $C_2$ is spanning. Then, letting $|C_1| = n$, we see that $|C_2| = n$ and $r(M) = r(C_1) + 1 = n$. It follows that  $M \cong M(K_{2,3}^-)$ when $n = 4$, while $M \cong M_n(2)$ when $n \ge 5$.

By \ref{l=1}, we may now suppose that $\cl(C_1)-C_1 = \{g\}$. Then  $\cl(C_2)-C_2 = \{f\}$, say. By \ref{subsub}, $\{a,b,g,f\}$ is a circuit of $M$ as are both $G' = (C_1 - \{a,b,f\}) \cup \{g\}$ and $F' = (C_2 - \{a,b,g\}) \cup \{f\}$. All circuits of $M$ other than $C_1, C_2, \{a,b,g,f\}, G'$, and $F'$ must meet both $C_1 - \cl(C_2)$ and $C_2 - \cl(C_1)$. Hence, by Lemma~\ref{common}(iii), every such circuit is spanning  as  $C_1 \bigtriangleup C_2$ properly contains $G'$.  Again letting $|C_1| = n$, we see that $|C_2| = n$ and $r(M) =  n$. Thus  $M\cong N_{n}(2)$ for some $n \ge 5$.
\end{proof}

\begin{theorem}
\label{EM2LCM}
The excluded minors for the class of $2$-closure-laminar matroids are $M^-(K_{2,3})$, $M_n(2)$ for all $n\geq4$, and $P_n(2)$ for all $n\geq5$.
\end{theorem}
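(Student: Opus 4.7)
The plan is to mirror the structure of the proof of Theorem~\ref{EM2LM}. Let $M$ be an excluded minor for the class of $2$-closure-laminar matroids, and by Lemma~\ref{kCL} let $C_1$ and $C_2$ be circuits of $M$ with $r(\cl(C_1)\cap\cl(C_2))\geq 2$ and neither contained in the closure of the other. Lemma~\ref{common} supplies $E(M)=\cl(C_1)\cup\cl(C_2)$, $|C_1|=|C_2|$, and both $\cl(C_i)$ are hyperplanes. I would split the argument on the size of $C_1\cap C_2$.

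If $|C_1\cap C_2|\geq 2$, then $C_1\cap C_2$ is an independent set of size at least two, so $M$ also fails to be $2$-laminar. Since every proper minor of $M$ is $2$-closure-laminar and hence $2$-laminar, $M$ is an excluded minor for the class of $2$-laminar matroids as well. Theorem~\ref{EM2LM} then leaves only $M^-(K_{2,3})$, $M_n(2)$, or $N_n(2)$. As noted just before Lemma~\ref{mnk}, $P_{n-1}(2)$ is a single-element contraction of $N_n(2)$, and by Lemma~\ref{therest}(iii) the matroid $P_{n-1}(2)$ is not $2$-closure-laminar. Hence $N_n(2)$ has a proper minor that fails to be $2$-closure-laminar, so $N_n(2)$ is not minor-minimal for that property, and $M$ is $M^-(K_{2,3})$ or $M_n(2)$.

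If $|C_1\cap C_2|\leq 1$, the aim is to show $M\cong P_n(2)$ for some $n\geq 5$. First I would rule out $|C_1\cap C_2|=0$: by a contraction argument analogous to the one at the start of the proof of Theorem~\ref{EM2LM}, in that configuration one can contract an element of $\cl(C_1)\cap\cl(C_2)$ or delete an element outside $C_1\cup C_2$ without destroying the witnessing circuits, contradicting minor-minimality. Writing $C_1\cap C_2=\{c\}$ and choosing an independent pair $\{c,x\}$ in $\cl(C_1)\cap\cl(C_2)$, a short circuit-elimination argument produces elements $a\in C_2-C_1$ and $b\in C_1-C_2$ making $\{a,b,c\}$ a triangle of $M$ lying in $\cl(C_1)\cap\cl(C_2)$. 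This triangle now plays the role of the pivot pair $\{a,b\}=C_1\cap C_2$ in the proof of Theorem~\ref{EM2LM}. I would then prove analogs of sublemmas~\ref{subsub} and~\ref{l=1}: each $g\in\cl(C_1)-C_1$ gives a partition of $C_1\cup g$ into circuits $G$ and $G'$ with $G\subseteq\cl(C_2)$ and $\{a,b,c\}\subseteq G$, and $|\cl(C_1)-C_1|=|\cl(C_2)-C_2|\leq 1$. A circuit-size chain argument like the one producing $|G_1|>|G_2|>\cdots$ in the proof of Theorem~\ref{EM2LM} then forces both excesses to be $0$, after which $E(M)=C_1\cup C_2\cup\{a,b\}$ with the triangle $\{a,b,c\}$ glued to two circuits of common size $n=|C_1|=|C_2|$; this is precisely $P_n(2)$.

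The main obstacle will be the second case. The key difficulty is that the shared pair $\{a,b\}$ now sits outside $C_1\cap C_2$, so every circuit-elimination calculation in the proof of Theorem~\ref{EM2LM} must be re-done with the triangle $\{a,b,c\}$ as the shared spine; in particular, the analog of sublemma~\ref{subsub} must identify $\{a,b,c\}\subseteq G$ rather than just $\{a,b\}\subseteq G$. Ruling out the sub-case $|C_1\cap C_2|=0$ cleanly, and checking that the triangle-based analogs of \ref{subsub} and \ref{l=1} really do go through with the same strong-circuit-elimination moves, are the steps where care is required. Once this triangle is fixed and those analogs are in place, the final identification $M\cong P_n(2)$ should follow by the same counting as in Theorem~\ref{EM2LM}.
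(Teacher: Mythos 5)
Your overall architecture is right and your first case is essentially the paper's: when $|C_1\cap C_2|\ge 2$ the matroid $M$ fails to be $2$-laminar while every proper minor is $2$-laminar, so Theorem~\ref{EM2LM} applies, and $N_n(2)$ is eliminated because $P_{n-1}(2)$ is a proper minor of it. The gap is that the case $|C_1\cap C_2|\le 1$, which you leave as a plan, is where almost all of the work in the actual proof lives, and two of your claims about how it will go are not right. First, ruling out $C_1\cap C_2=\emptyset$ is not ``a contraction argument analogous to the one at the start of the proof of Theorem~\ref{EM2LM}.'' Contracting an element $z$ of $\cl(C_1)\cap\cl(C_2)$ destroys $C_1$ and $C_2$ as circuits (each contains a circuit through $z$ inside $C_i\cup z$, so $C_i$ is no longer a circuit of $M/z$), and $\cl(C_1)\cap\cl(C_2)$ can be a rank-$2$ flat with only two elements, so deleting one of them drops the witnessing rank below $2$. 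The paper instead spends roughly a page on this subcase: it fixes an independent pair $\{x,y\}\subseteq\cl(C_1)\cap\cl(C_2)$, splits on $|\{x,y\}\cap C_2|$, and in the hardest branch builds circuits $C_x,C_x',C_y,C_y'$ meeting the hypotheses of Lemma~\ref{MatroidsThatBehaveLikeGraphsBehaveLikeGraphs}, concluding that $x$ and $y$ are parallel and contradicting simplicity. Nothing in your sketch produces that contradiction.

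Second, in the case $|C_1\cap C_2|=\{x\}$ your plan is internally inconsistent: you propose to show the analog of~\ref{l=1} and then that ``both excesses are $0$,'' but in $P_n(2)$ one has $\cl(C_1)-C_1=\{y'\}$ and $\cl(C_2)-C_2=\{y\}$, each of size one --- indeed your own triangle $\{a,b,c\}$ forces $a\in\cl(C_1)-C_1$, so the excess cannot vanish. The paper's mechanism here is also genuinely different from the chain argument $|G_1|>|G_2|>\cdots$ of Theorem~\ref{EM2LM}: it contracts the common element $x$, notes that the circuit $D$ with $\{x,y\}\subseteq D\subseteq C_2\cup y$ satisfies $D-x\subseteq\cl_{M/x}(C_1-x)\cap\cl_{M/x}(C_2-x)$, and uses minimality of $M$ to force $r_{M/x}(D-x)\le 1$, so that $D$ is a triangle; the same contraction trick (applied to a hypothetical second element of $\cl(C_2)-C_2$) pins down $\cl(C_i)-C_i$ exactly, and Lemma~\ref{common}(iii) --- not a descending chain of circuit sizes --- shows every circuit other than $C_1,C_2,C_1',C_2',D$ is spanning, yielding $M\cong P_n(2)$. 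So while you have correctly identified the shape of the answer and the easy case, the proposal as written does not contain the argument for the case that actually produces the $P_n(2)$ family, and the two specific shortcuts you propose for it would fail.
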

\begin{proof}
Let $M$ be an excluded minor for the class of 2-closure-laminar matroids. Clearly $M$ is simple.
 Now $M$ has two circuits $C_1$ and $C_2$ with $r(\cl(C_1)\cap\cl(C_2))\geq2$ such that neither is a subset of the closure of the other. By Lemma~\ref{common}(i),   $E(M)=\cl(C_1)\cup \cl(C_2)$. Moreover, $|C_1\cap C_2|\leq2$, otherwise we could   contract an element of $C_1\cap C_2$ and still have a  matroid that is not 2-closure-laminar. Next we observe that 
 
\begin{sublemma}
\label{newton}
$(\cl(C_1) \cup \cl(C_2)) - (C_1\cup C_2) \subseteq \cl(C_1) \cap \cl(C_2)$.
\end{sublemma}

Assume the contrary. If $e \in \cl(C_1) - (\cl(C_2) \cup C_1)$, then, as $M\ba e$ is  $2$-closure-laminar, $\cl(C_1) - e \subseteq \cl(C_2)$, so
$\cl(C_1)   \subseteq \cl(C_2)$; a contradiction.

We break the rest of the proof into three cases based on the size of $C_1\cap C_2$.

\begin{sublemma}
\label{NotEmpty}
$C_1\cap C_2\not=\emptyset$.
\end{sublemma}

Assume the contrary. Let $\{x,y\}$ be a  subset of $\cl(C_1)\cap\cl(C_2)$. To show \ref{NotEmpty}, we first establish that 
\begin{sublemma}
\label{NotEmpty2}
$\{x,y\} \not\subseteq C_2$. 
\end{sublemma}

Suppose $\{x,y\}\subseteq C_2$. As $M|{(C_1\cup\{x,y\})}$ is connected, there is a circuit $D_1$ with $\{x,y\}\subseteq D_1\subseteq C_1\cup\{x,y\}$. Then, for $c$ in $C_1-D_1$, the matroid $M\backslash c$ is 2-closure-laminar. Now $C_2\not\subseteq \cl(D_1)$ since $\cl(D_1)\subseteq\cl(C_1)$. Thus  $D_1\subseteq\cl(C_2)$, so $C_1\cap\cl(C_2)$ is non-empty. Choose an element $z$ in $C_1\cap\cl(C_2)$. 
Now $M$ has circuits $C_x$ and $C_x'$, with $x\in C_x\cap C_x'$, and $C_x\cup C_x'=C_1\cup x$. It also has circuits $C_y$, and $C_y'$ with $y\in C_y\cap C_y'$ and $C_y\cup C_y'=C_1\cup y$. We may assume that $z\in C_x\cap C_y$. 
Then $\{x,z\}\subseteq\cl(C_x)\cap\cl(C_2)$. As $C_1-(C_x\cup C_2)$ is non-empty, this implies that $C_x\subseteq\cl(C_2)$ since $C_2\not\subseteq\cl(C_x)$ because $\cl(C_x)\subseteq\cl(C_1)$. Similarly, $C_y\subseteq\cl(C_2)$.

Suppose $(C_x-x)\cap (C_x'-x)$ is non-empty and choose $e$ in $C_x\cap C_x'$. Then, as $\{e,x\} \subseteq C_x \cap C'_x$ and $y\not\in C_x\cup C_x'$, either $C_x\subseteq\cl(C_x')$ or $C_x'\subseteq \cl(C_x)$. In the latter case, $C_x'\subseteq\cl(C_x)\subseteq\cl(C_2)$, so $C_1\subseteq\cl(C_2)$; a contradiction. Thus $C_x\subseteq\cl(C_x')$. But then $C_1$ and $C_x'$ have the same rank, and hence the same size. Then $C_x'=C_1\triangle \{x,c\}$ for some $c\in C_1$. Now consider the 2-closure-laminar matroid $M\backslash c$. In it, $C_x$ and $C_2$ are circuits as $c\not\in C_2$. Then $r_{M\backslash c}(\cl_{M\backslash c}(C'_x)\cap\cl_{M\backslash c}(C_2))\geq2$ so $C'_x\subseteq\cl_{M\backslash c}(C_2)$ or $C_2\subseteq\cl_{M\backslash c}(C'_x)$. As $C_x \subseteq \cl_M(C_2)$ and 
$\cl_{M\backslash c}(C_x')\subseteq \cl_M(C_1)$, we obtain the contradiction that $C_1 \subseteq \cl(C_2)$ or $C_2 \subseteq \cl(C_1)$. We conclude that $C_x\cap C_x'=\{x\}$. Likewise $C_y\cap C_y'=\{y\}$.

If there is some element $f$ in $C_x\cap C_y'$, then, as $f\in C_x$, we have $f\in\cl(C_2)$. But then $\{f,y\}\subseteq\cl(C_y')\cap\cl(C_2)$ Thus either $C_y'\subseteq\cl(C_2)$ or $C_2\subseteq\cl(C_y')$. The former cannot occur as $C_y\subseteq\cl(C_2)$; nor can the latter as $\cl(C'_y) \subseteq \cl(C_1)$. Hence $C_x\cap C_y'=\emptyset$.  Likewise, $C_y\cap C_x'=\emptyset$.  But then $C_x\triangle\{x,y\}=C_y$ and $C_x'\triangle\{x,y\}=C_y'$. Hence, by Lemma~\ref{MatroidsThatBehaveLikeGraphsBehaveLikeGraphs}, $x$ and $y$ are parallel;  a contradiction. Thus \ref{NotEmpty2} holds. 

Next we suppose that $x\in C_2$ and $y\not\in C_2$. Choose a circuit $D$ with $\{x,y\}\subseteq D\subseteq C_2\cup y$. Then $D\subseteq\cl(C_1)$, since $C_2-(D\cup C_1)\not=\emptyset$ and  $\cl(D)\cap\cl(C_1)$ has rank at least two, while $\cl(D)\subseteq\cl(C_2)$. Now $(D \cap C_2) - x$ certainly contains some element $d$. Then $\{x,d\} \subseteq \cl(C_1) \cap \cl(C_2)$. Applying \ref{NotEmpty2} gives a contradiction.

We may now assume that $\{x,y\} \cap C_2 = \emptyset$. Let $D$ be a circuit   with $\{x,y\}\subseteq D\subseteq C_2\cup\{x,y\}$.  Then $D\subseteq\cl(C_1)$ as  $C_2-(D\cup C_1) \neq \emptyset$. By replacing  $y$ by an element of $D \cap C_2$, we revert to the case eliminated in the last paragraph. Hence \ref{NotEmpty} holds.

Now, we consider the case when $|C_1\cap C_2|=1$. Let $C_1\cap C_2 = \{x\}$ and choose $y$ in $(\cl(C_1)\cap\cl(C_2))- (C_1\cap C_2)$. Suppose $y\not\in C_1\cup C_2$. Then $M$ has circuits $D_1$ and $D_2$ containing $\{x,y\}$ and contained in $C_1\cup y$ and $C_2\cup y$, respectively. Without loss of generality, as $E(M)-(D_1\cup D_2)$ is non-empty, we may assume that $D_1\subseteq\cl(D_2)$. Since $|D_1|\geq 3$, there is an element $z$ of $D_1-\{x,y\}$. Then $z$ is in $\cl(D_2)$ and so is in $\cl(C_2)$. Thus $\{x,z\}\subseteq C_1 \cap\cl_{M\backslash y}(C_2)$ and we obtain a contradiction. It follows, by \ref{newton}, that $C_1 \cup C_2 = \cl(C_1) \cup \cl(C_2)  = E(M)$.

We may now assume that $y\in C_1\cap \cl(C_2)$. Then $M$ has a circuit $D$ such that $\{x,y\} \subseteq D \subseteq  C_2\cup y$. 
Clearly $D\subseteq\cl(C_2)$. To see that $D\subseteq\cl(C_1)$, we note that $C_2-(D\cup C_1)\not=\emptyset$, and $C_1 \not\subseteq \cl(D)$ as $D \subseteq\cl(C_2)$.
We now have $D- x \subseteq \cl_{M/x}(C_1 - x) \cap \cl_{M/x}(C_2 - x)$. Thus $r_{M/x}(D - x) \le 1$, otherwise, for some $\{i,j\} = \{1,2\}$, we have $\cl_{M/x}(C_i - x) \subseteq \cl_{M/x}(C_j - x)$; a contradiction. As $y \in D - x$, we see that $r_M(D) = 2$, so $D = \{x,y,y'\}$ for some $y'$. We deduce that 
\begin{sublemma}
\label{ddd}
$\{x,y,y'\}$ is the only circuit of $M|(C_2 \cup y)$ containing $\{x,y\}$.
\end{sublemma}

We show next that 
\begin{sublemma}
\label{ddt}
$(C_2 - \{x,y'\}) \cup y$ is the only circuit of $M|(C_2 \cup y)$  containing $y$ but not $x$. 
\end{sublemma}

By Lemma~\ref{Obvious}(ii), every circuit $D'$ of $M$ that contains $y$, avoids $x$, and is contained in $C_2 \cup y$ must contain $(C_2 - \{x,y'\}) \cup y$.    If $y' \in D'$, then $D' = (C_2  \cup y) - x$. Using $D'$ and $D$, we find a circuit $D''$ containing $x$ and contained in $(C_2 \cup y) - y'$. As $D''$ must also contain $y$, we see that $\{x,y\} \subseteq D''$ and we showed in \ref{ddd} that $M$ has no such circuit. We conclude that \ref{ddt} holds.

By \ref{ddt} and symmetry, $M$ has  $(C_1 - \{x,y\}) \cup y'$ as a  circuit, say $C'_1$. Let $C'_2$ be the circuit $(C_2 - \{x,y'\}) \cup y$. Next we note that 

\begin{sublemma}
\label{ddu}
$\cl(C_2) - C_2 = \{y\}$ and  $\cl(C_1) - C_1 = \{y'\}$.
\end{sublemma}

Assume there is an element $y_1$ in $\cl(C_2) - C_2 - y$.  Then $\{y,y_1\}$ is a subset of $\cl_{M/x}(C_1 - x) \cap \cl_{M/x}(C_2 - x)$  that is independent in $M/x$. Thus $C_i - x \subseteq \cl_{M/x}(C_j - x)$ for some $\{i,j\} = \{1,2\}$, so $C_i \subseteq \cl_{M}(C_j)$; a contradiction. It follows that $\cl(C_2) - C_2 = \{y\}$. By symmetry,  $\cl(C_1) - C_1 = \{y'\}$.

By Lemma~\ref{common}, $M$ has 
$\cl(C_1)$ and $\cl(C_2)$ as hyperplanes, so $|C_1| = |C_2|$. 
Let $C$ be a circuit of $M$ that is not $C_1$, $C_2$, $C_1'$, $C_2'$, or $D$. If $y \in C \subseteq C_2 \cup y$, then, by \ref{ddd} and \ref{ddt}, $C$ is $D$ or $C'_2$.  We deduce that $C$  meets both $C_1-\cl(C_2)$ and $C_2-\cl(C_1)$. Then, by Lemma~\ref{common}(iii), either $C$ is spanning, or 
$C$ contains $C_1 \bigtriangleup C_2$. But $|C_1 \cap C_2| = 1$ so $C$ is spanning. 
 We conclude that $C_1$, $C_2$, $C_1'$, $C_2'$, and $D$ are the only non-spanning circuits of $M$. Hence  $M \cong P_n(2)$ for some $n \ge 4$.

Finally,  suppose $|C_1\cap C_2|=2$. Then $M$ is not 2-laminar so it has as a minor one of the matroids identified in Theorem~\ref{EM2LM}. But $M$ cannot have a $N_n(2)$-minor for any $n \ge 5$ as this matroid has  $P_{n-1}(2)$, an excluded minor for the class of 2-closure-laminar matroids,  as a proper minor.
Thus $M$ has as a minor $M^-(K_{2,3})$ or $M_n(2)$ for some $n\geq 4$.  The result follows by Lemmas~\ref{mnk} and \ref{therest}. 
\end{proof}

Our methods for finding the excluded minors for the classes of $k$-laminar and $k$-closure-laminar matroids for $k=2$ do not seem to extend to  larger values of $k$.

\section{Intersections with other classes of matroids}
\label{BS}

We now discuss how the classes of $k$-closure-laminar and $k$-laminar matroids relate to some other well-known classes of matroids.  Finkelstein~\cite{LF} showed that all laminar matroids are gammoids, so they are representable over all sufficiently large fields \cite{pw,ip}. An immediate consequence of the following easy observation is that, for all $k\ge 2$,  if $M$ is  a $k$-closure-laminar matroid or a $k$-laminar matroid, then $M$  need not be  representable and hence $M$ need not be a gammoid.

\begin{proposition}
If $r(M)\leq k+1$, then $M$ is $k$-laminar and $k$-closure-laminar.
\end{proposition}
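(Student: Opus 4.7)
The plan is to first invoke Proposition~\ref{baby}(i) to reduce the statement to proving only the $k$-closure-laminar conclusion, and then to use Lemma~\ref{kCL} to rephrase this as the claim that whenever $C_1$ and $C_2$ are circuits of $M$ with $r(\cl(C_1)\cap \cl(C_2))\geq k$, one of $C_1,C_2$ lies in the closure of the other.

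The driving observation is that the hypothesis $r(M)\le k+1$ pins down the possible ranks of the two circuits very tightly. Specifically, since $\cl(C_i)\supseteq \cl(C_1)\cap \cl(C_2)$, each circuit has rank at least $k$, while $r(C_i)\le r(M)\le k+1$ from above, so $r(C_i)\in\{k,k+1\}$. I would then split into two cases. If some $r(C_i)=k+1$, then $\cl(C_i)$ is a flat of rank $r(M)$ and must therefore equal $E(M)$, so the other circuit is automatically contained in $\cl(C_i)$. Otherwise $r(C_1)=r(C_2)=k$, in which case $\cl(C_1)\cap\cl(C_2)$ is a flat sitting inside $\cl(C_1)$ with rank at least $k=r(\cl(C_1))$, hence equals $\cl(C_1)$; this gives $\cl(C_1)\subseteq\cl(C_2)$ and in particular $C_1\subseteq\cl(C_2)$.

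There is essentially no obstacle here: the argument is just a dimension count inside the (very short) lattice of flats, together with the fact that an intersection of flats is a flat. The only mildly delicate point is remembering to invoke Lemma~\ref{kCL} up front, so that the same case analysis simultaneously handles the $k$-laminar property via Proposition~\ref{baby}(i).
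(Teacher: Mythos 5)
Your proof is correct. The paper offers no proof at all (it labels this an ``easy observation''), and your argument---reducing to the $k$-closure-laminar case via Proposition~\ref{baby}(i), rephrasing via Lemma~\ref{kCL}, and then noting that each $r(C_i)$ is forced into $\{k,k+1\}$ so that either some $\cl(C_i)$ is spanning or $\cl(C_1)\cap\cl(C_2)$ is a rank-$k$ flat inside the rank-$k$ flat $\cl(C_1)$ and hence equals it---is precisely the routine verification the authors intend.
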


We  use the   next  lemma to describe the intersection of   the classes of 2-laminar and 2-closure-laminar matroids with the classes of binary and ternary matroids. 

\begin{lemma}
\label{nb}
The matroid $M^-(K_{2,3})$ is ternary and non-binary;  each of $M_n(2)$  and  $P_n(2)$ has a $U_{n,2n-3}$-minor; and  $N_n(2)$ has a $U_{n,2n-4}$-minor.
\end{lemma}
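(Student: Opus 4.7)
My plan is to handle the four claims separately, using an explicit $\mathrm{GF}(3)$-representation for $M^-(K_{2,3})$ and parallel deletion arguments for the three truncation matroids. Throughout I use that truncation commutes with deletion and that the truncation of a uniform matroid $U_{r,m}$ to any rank $n\le r$ is $U_{n,m}$.

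For $M^-(K_{2,3})$, I would first construct a $4\times 6$ matrix over $\mathrm{GF}(3)$ realising the matroid. Starting from the oriented incidence matrix of $K_{2,3}$ and suitably perturbing a single column, one can arrange that the relaxed $4$-cycle $\{a_1,a_2,b_1,b_2\}$ becomes linearly independent while the other two $4$-cycles $\{a_1,b_1,a_3,b_3\}$ and $\{a_2,b_2,a_3,b_3\}$ remain linearly dependent; a finite check confirms that no other dependencies are affected. This establishes ternarity. For non-binarity, I would invoke the standard fact that in a binary matroid the symmetric difference of any two circuits is a disjoint union of circuits. The symmetric difference of the two remaining $4$-circuits is the relaxed set $\{a_1,a_2,b_1,b_2\}$, which in $M^-(K_{2,3})$ is a basis and hence independent, so it cannot be a disjoint union of nonempty circuits.

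For the three uniform-matroid minors the common strategy is to delete a small set of edges from the underlying graph so that what remains is a single cycle; its cycle matroid is then $U_{m-1,m}$, and truncation to rank $n\le m-1$ gives $U_{n,m}$. Recall $M_n(2)$ is the truncation to rank $n$ of $M(G)$, where $G$ is the theta graph with paths of lengths $2$, $n-2$, and $n-2$. Delete one edge $p$ of the length-$2$ path; then $G\setminus p$ has a unique cycle $X_1\cup X_2$, of length $2n-4$. For $n\ge 5$ no $n$-subset of the edges of $G\setminus p$ contains this cycle, so every $n$-subset is independent in $M(G\setminus p)$ and $M_n(2)\setminus p=U_{n,2n-3}$. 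For $P_n(2)$, deleting the two triangle edges $a,b$ at which the two $(n-1)$-cycles are parallel-connected leaves a single cycle of length $(n-2)+(n-2)+1=2n-3$, whose cycle matroid is $U_{2n-4,2n-3}$, and truncating to rank $n$ gives $U_{n,2n-3}$. For $N_n(2)$, deleting the two edges of the $4$-cycle at which the two $(n-2)$-cycles are attached leaves a single cycle of length $2n-4$, whose cycle matroid is $U_{2n-5,2n-4}$, and truncating to rank $n$ gives $U_{n,2n-4}$.

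The main obstacle is the explicit construction and verification of the $\mathrm{GF}(3)$-representation for $M^-(K_{2,3})$: one must exhibit specific column vectors and check that the resulting matroid has exactly the prescribed set of dependent $4$-subsets. Once that is done, the remaining three claims reduce to routine verifications of graph structure and the interaction of truncation with deletion.
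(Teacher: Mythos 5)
Your treatment of the three uniform-matroid minors is essentially the paper's own argument: for $P_n(2)$ and $N_n(2)$ you delete the basepoints of the parallel connections so that a single long cycle remains, and for $M_n(2)$ you delete an edge of the path $P$; the counts $2n-3$ and $2n-4$, and the interaction of truncation with these deletions, all check out. (One small point in your favour: you correctly restrict the $M_n(2)$ computation to $n\ge 5$. For $n=4$ the deletion leaves a $4$-cycle plus a coloop, and in fact $M_4(2)\cong M(K_{2,3})$ has no $U_{4,5}$-minor at all, so the blanket claim in the lemma, and in the paper's proof, is harmlessly imprecise at $n=4$; it is only needed, and only true, for $n\ge 5$.) Where you genuinely diverge is on $M^-(K_{2,3})$. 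Your non-binarity argument is correct and self-contained: the symmetric difference of the two surviving $4$-circuits is exactly the relaxed set, which is a basis and hence not a disjoint union of circuits. For ternarity, however, you have only described a computation --- ``suitably perturbing a single column'' and ``a finite check confirms'' --- without exhibiting the $4\times 6$ matrix or carrying out the check, so that portion of your write-up is a plan rather than a proof. The paper sidesteps the computation entirely by observing that $M^-(K_{2,3})$ is obtained from $U_{2,4}$ by adding an element in series to each of two of its elements; since representability over $\mathrm{GF}(3)$ is preserved by series extension and the $U_{2,4}$-minor certifies non-binarity, both halves of the first claim follow at once. I would recommend either adopting that observation or actually writing down and verifying the explicit ternary matrix.
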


\begin{proof}
The first part follows because $M^-(K_{2,3})$   can be obtained from $U_{2,4}$ by adding elements in series to two  elements of the latter. Next   we note that we get $U_{n,2n-3}$ both from $M_n(2)$ by deleting an element of the path $P$ and   from $P_n(2)$ by deleting the basepoints of the parallel connections involved in   its construction.   Finally, deleting the basepoints of the parallel connections involved in producing $N_n(2)$ gives $U_{n,2n-4}$.
\end{proof}

The next two results follow without difficulty by combining the last lemma with Theorems~\ref{EM2LM} and \ref{EM2LCM} as the set of excluded minors for $\Ms \cap \Ns$ where $\Ms$ and $\N$ are minor-closed classes of matroids consists of the minor-minimal matroids that are excluded minors for $\Ms$ or $\Ns$ (see, for example, \cite[Lemma 14.5.1]{James}). Recall that $N_5(2)$ and $P_4(2)$ are the  matroids obtained by adjoining, via parallel connection, two triangles across distinct elements of a $4$-circuit and a triangle, respectively.

\begin{corollary}
\label{B2CL} A matroid $M$  is binary and $2$-laminar if and only if it has no minor isomorphic to $U_{2,4}$, $M(K_{2,3})$, or $N_5(2)$.
\end{corollary}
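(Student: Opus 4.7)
The plan is to invoke the standard principle (\cite[Lemma 14.5.1]{James}) that the excluded minors for the intersection $\Ms \cap \Ns$ of two minor-closed classes are precisely the minor-minimal members of the combined collection of excluded minors of $\Ms$ and of $\Ns$. Taking $\Ms$ to be the class of binary matroids, whose unique excluded minor is $\Utf$, and $\Ns$ to be the class of $2$-laminar matroids, whose excluded minors are enumerated in Theorem~\ref{EM2LM}, the candidate set becomes $\{\Utf,\, M^-(K_{2,3})\} \cup \{M_n(2) : n \ge 4\} \cup \{N_n(2) : n \ge 5\}$, and the task is to cull it down to minor-minimal members.

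The next step is to use Lemma~\ref{nb} to eliminate all but three entries. The lemma exhibits $\Utf$ directly as a minor of $M^-(K_{2,3})$, and provides a $U_{n,2n-3}$-minor of $M_n(2)$ and a $U_{n,2n-4}$-minor of $N_n(2)$. Contracting $n-2$ independent elements of $U_{n,m}$ yields $U_{2,m-n+2}$, so $M_n(2)$ has a $U_{2,n-1}$-minor and $N_n(2)$ has a $U_{2,n-2}$-minor, and a further restriction to any $4$-element subset gives $\Utf$ whenever $n-1 \ge 4$ and $n-2 \ge 4$ respectively. Hence $\Utf$ is a proper minor of $M^-(K_{2,3})$, of $M_n(2)$ for each $n \ge 5$, and of $N_n(2)$ for each $n \ge 6$, so none of these is minor-minimal in the combined collection.

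The three survivors are $\Utf$, $M_4(2) = M(K_{2,3})$, and $N_5(2)$, and each must be confirmed to lie outside the intersection while all its proper minors lie inside. Every proper minor of $\Utf$ is a uniform matroid of rank at most two on at most three elements, hence both binary and $2$-laminar. For the other two, the key observation is that both are graphic, and hence binary: $M(K_{2,3})$ by its name, and $N_5(2)$ because the parallel-connection construction in its definition already yields a graphic matroid of rank $3+2+2-1-1=5$, rendering the truncation to rank $5$ vacuous. By Lemmas~\ref{mnk} and \ref{therest}, both are excluded minors for the $2$-laminar class, so their proper minors are $2$-laminar; and since no minor of a binary matroid is $\Utf$, their proper minors are also binary. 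This places all proper minors of each of the three survivors inside the intersection, while the survivors themselves lie outside.

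The main obstacle, modest though it is, will be the rank computation verifying that $N_5(2)$ is genuinely graphic, so that no fresh excluded minor sneaks in at $n=5$; once that is in hand, the rest is pure bookkeeping built on Lemma~\ref{nb} and Theorem~\ref{EM2LM}.
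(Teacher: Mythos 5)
Your proposal is correct and follows essentially the same route as the paper: the authors also derive this corollary by combining Lemma~\ref{nb} with Theorem~\ref{EM2LM} and the standard fact (\cite[Lemma 14.5.1]{James}) that the excluded minors for an intersection of minor-closed classes are the minor-minimal members of the union of the two excluded-minor lists. Your explicit verification that $N_5(2)$ is graphic (so that the truncation is vacuous) and the elimination of $M_n(2)$ for $n\ge 5$ and $N_n(2)$ for $n\ge 6$ via their large uniform minors are exactly the details the paper leaves to the reader.
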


\begin{corollary}
\label{B2CL} A matroid $M$ is binary and $2$-closure-laminar if and only if it has no minor isomorphic to $U_{2,4}$, $M(K_{2,3})$, or $P_4(2)$.
\end{corollary}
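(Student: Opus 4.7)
The plan is to apply the standard principle (see \cite[Lemma 14.5.1]{James}) that, for minor-closed classes $\Ms$ and $\Ns$, the excluded minors of $\Ms\cap\Ns$ are exactly the minor-minimal matroids among the union of the excluded-minor families of $\Ms$ and $\Ns$. Taking $\Ms$ to be the class of binary matroids and $\Ns$ the class of $2$-closure-laminar matroids, Tutte's theorem gives $U_{2,4}$ as the only excluded minor for $\Ms$. Theorem~\ref{EM2LCM}, combined with Lemma~\ref{therest}(iii), supplies the excluded minors for $\Ns$: namely, $M^-(K_{2,3})$, $M_n(2)$ for $n\geq 4$, and $P_n(2)$ for $n\geq 4$. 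This is exactly the framework used for the preceding $2$-laminar corollary.

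The first step is to use Lemma~\ref{nb} to eliminate from this combined list the matroids that properly contain $U_{2,4}$ as a minor. Since $M^-(K_{2,3})$ is non-binary, it has $U_{2,4}$ as a minor and is discarded. For each $n\geq 5$, the matroids $M_n(2)$ and $P_n(2)$ both have a $U_{n,2n-3}$-minor; contracting $n-2$ elements of $U_{n,2n-3}$ yields $U_{2,n-1}$, and because $n-1\geq 4$ we may then delete $n-5$ further elements to obtain $U_{2,4}$. Hence all $M_n(2)$ and $P_n(2)$ with $n\geq 5$ are also discarded, leaving $U_{2,4}$, $M_4(2)\cong M(K_{2,3})$, and $P_4(2)$ as the only candidates.

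To conclude, I would verify that these three form an antichain under the minor order. The matroid $U_{2,4}$ is non-binary while $M(K_{2,3})$ and $P_4(2)$ are both graphic, so $U_{2,4}$ is not a minor of either. Since $|M(K_{2,3})|=6<7=|P_4(2)|$, $P_4(2)$ cannot be a minor of $M(K_{2,3})$. The main obstacle, and the only non-trivial step, is showing that $M(K_{2,3})$ is not a minor of $P_4(2)$. Because both matroids have rank $4$, only single-element deletions of $P_4(2)$ need be considered; using the parallel-connection description of $P_4(2)$ in terms of three triangles $T_0$, $T_1$, $T_2$ whose union is the entire ground set, any such deletion destroys at most two of these triangles and so leaves a $3$-circuit, contradicting the triangle-freeness of $M(K_{2,3})$.
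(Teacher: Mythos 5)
Your proposal is correct and is essentially the paper's own argument (which the paper leaves to the reader as routine): apply \cite[Lemma 14.5.1]{James} to the union of the excluded-minor lists for binary and $2$-closure-laminar matroids, then use Lemma~\ref{nb} to discard every candidate other than $U_{2,4}$, $M_4(2)\cong M(K_{2,3})$, and $P_4(2)$. The one step you flag as non-trivial---showing $M(K_{2,3})$ is not a minor of $P_4(2)$---is in fact automatic, since $M_4(2)$ and $P_4(2)$ are both excluded minors for the class of $2$-closure-laminar matroids and therefore incomparable in the minor order, though your direct triangle-counting argument is also valid.
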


Similarly, we find the  excluded minors for the classes of ternary $2$-laminar matroids and ternary $2$-closure-laminar matroids by noting that deleting an element from $F^*_7$  produces $M(K_{2,3})$, so $F_7^*$ is not $2$-laminar. 

\begin{corollary}
\label{T2LM} A matroid $M$  is ternary and $2$-laminar if and only if it has no minor isomorphic to $U_{2,5}$, $U_{3,5}$, $F_7$,  $M^-(K_{2,3})$, $M(K_{2,3})$, or $N_5(2)$.
\end{corollary}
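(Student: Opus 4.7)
The plan is to apply the general principle (see \cite[Lemma 14.5.1]{James}) that the excluded minors for the intersection $\Ms \cap \Ns$ of two minor-closed classes are precisely the minor-minimal matroids in $\mathrm{EM}(\Ms) \cup \mathrm{EM}(\Ns)$. I take $\Ms$ to be the class of ternary matroids, whose excluded minors are $U_{2,5}$, $U_{3,5}$, $F_7$, and $F_7^*$ by the theorem of Bixby, Reid, and Seymour, and $\Ns$ to be the class of $2$-laminar matroids, whose excluded minors are $M^-(K_{2,3})$, $M_n(2)$ for $n \geq 4$, and $N_n(2)$ for $n \geq 5$ by Theorem~\ref{EM2LM}. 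The task then reduces to identifying the minor-minimal members of this union.

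First I would eliminate those members of the union that are not minor-minimal. Since $F_7^*\backslash e \cong M(K_{2,3}) = M_4(2)$, as noted in the paragraph preceding the statement, $F_7^*$ contains a $2$-laminar excluded minor and is therefore not minor-minimal. For each $n \geq 5$, Lemma~\ref{nb} gives that $M_n(2)$ has a $U_{n, 2n-3}$-minor; contracting $n-3$ elements of this uniform matroid yields $U_{3,n}$, which in turn has $U_{3,5}$ as a deletion, so $M_n(2)$ has $U_{3,5}$ as a proper minor. An analogous calculation using the $U_{n, 2n-4}$-minor of $N_n(2)$ supplied by Lemma~\ref{nb} shows that, for $n \geq 6$, the matroid $N_n(2)$ has $U_{3,5}$ as a proper minor. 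Hence all $M_n(2)$ with $n \geq 5$ and all $N_n(2)$ with $n \geq 6$ fall out.

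Next I would verify minor-minimality of each of the six matroids in the statement. Every $2$-laminar excluded minor has rank at least $4$, while $U_{2,5}$, $U_{3,5}$, and $F_7$ have rank at most $3$; hence none of these three ternary excluded minors contains any $2$-laminar excluded minor as a minor. Conversely, $M(K_{2,3})$ and $N_5(2)$ are graphic, hence regular and in particular ternary, while $M^-(K_{2,3})$ is ternary by Lemma~\ref{nb}, so none of these three contains any ternary excluded minor as a minor. Since each of the six matroids is already an excluded minor for $\Ms$ or $\Ns$ and has no proper minor in $\mathrm{EM}(\Ms) \cup \mathrm{EM}(\Ns)$, the list is exactly the set of excluded minors for the intersection and the corollary follows.

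Since Lemma~\ref{nb}, Theorem~\ref{EM2LM}, and the observation that $F_7^* \backslash e \cong M(K_{2,3})$ have done the substantive work in advance, no step here is really an obstacle; the only point requiring any care is the uniform-matroid contraction-and-deletion count used to discard $M_n(2)$ for $n \geq 5$ and $N_n(2)$ for $n \geq 6$, and this is entirely routine.
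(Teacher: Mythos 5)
Your proposal is correct and follows exactly the route the paper intends: apply \cite[Lemma 14.5.1]{James} to the excluded-minor lists for ternary matroids and for $2$-laminar matroids (Theorem~\ref{EM2LM}), discard $F_7^*$ via $F_7^*\backslash e \cong M(K_{2,3})$ and discard $M_n(2)$ for $n\ge 5$ and $N_n(2)$ for $n\ge 6$ via the uniform minors of Lemma~\ref{nb}, then confirm minor-minimality of the remaining six. The paper leaves these verifications as "following without difficulty," and your filled-in details (the rank argument for $U_{2,5}$, $U_{3,5}$, $F_7$ and the ternarity of the three surviving $2$-laminar excluded minors) are all sound.
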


\begin{corollary}
\label{T2CLM} A matroid $M$  is ternary and $2$-closure-laminar if and only if it has no minor isomorphic to $U_{2,5}$, $U_{3,5}$, $F_7$,  $M^-(K_{2,3})$, $M(K_{2,3})$, or $P_4(2)$.
\end{corollary}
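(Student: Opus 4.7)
The plan is to apply the general principle already used for the preceding corollaries: the excluded minors for the intersection $\Ms \cap \Ns$ of two minor-closed classes are exactly the minor-minimal elements of the union of the two sets of excluded minors (see \cite[Lemma 14.5.1]{James}). Take $\Ms$ to be the class of ternary matroids, whose excluded minors are $U_{2,5}$, $U_{3,5}$, $F_7$, and $F_7^*$, and take $\Ns$ to be the class of $2$-closure-laminar matroids, whose excluded minors, by Theorem~\ref{EM2LCM}, are $M^-(K_{2,3})$, the family $\{M_n(2) : n \ge 4\}$, and the family $\{P_n(2) : n \ge 4\}$. The whole task reduces to pruning the union of these lists to the minor-minimal members.

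First I would prune the two infinite families. By Lemma~\ref{nb}, for every $n \ge 4$ each of $M_n(2)$ and $P_n(2)$ has a $U_{n,2n-3}$-minor. For $n \ge 5$, iterated contraction of non-loops and deletion of non-coloops in $U_{n,2n-3}$ produces $U_{3,5}$, since $U_{n,2n-3}/e \cong U_{n-1,2n-4}$ and $U_{n,2n-3}\setminus f \cong U_{n,2n-4}$ whenever such an element exists. Thus $M_n(2)$ and $P_n(2)$ for $n \ge 5$ contain the listed matroid $U_{3,5}$ as a minor and are therefore not minor-minimal. The remaining members of the two families are $M_4(2) \cong M(K_{2,3})$ and $P_4(2)$, both of which are graphic and hence ternary, and both of which appear in the corollary's list.

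Next I would eliminate $F_7^*$ using exactly the observation made in the paragraph preceding the statement: some single-element deletion of $F_7^*$ is isomorphic to $M(K_{2,3})$, so $F_7^*$ has $M(K_{2,3})$ as a proper minor and is not minor-minimal. The remaining candidates are $U_{2,5}, U_{3,5}, F_7, M^-(K_{2,3}), M(K_{2,3})$, and $P_4(2)$, which is exactly the list in the statement.

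What remains is to verify that these six matroids form an antichain in the minor order. This is a short case-check rather than a genuine obstacle: $U_{2,5}$ and $U_{3,5}$ have no proper non-spanning, non-cospanning circuits, so they are not minors of any of the other (simple, connected, rank-$4$ or rank-$3$) listed matroids; $F_7$ is binary while $M^-(K_{2,3})$ is non-binary by Lemma~\ref{nb}, and $F_7$ has no $U_{2,4}$-minor while each of $M^-(K_{2,3})$, $M(K_{2,3})$, and $P_4(2)$ either equals or produces one under the operations bounded by element count; and $M(K_{2,3})$ and $P_4(2)$ are distinct rank-$4$ matroids on six elements with differing circuit sizes, so neither is a minor of the other. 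The main bookkeeping step, if anything, is confirming these size/rank/circuit-size incomparabilities, but no new structural argument is required.
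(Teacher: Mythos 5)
Your proposal follows essentially the same route the paper sketches: apply \cite[Lemma 14.5.1]{James} to the union of the excluded-minor lists, prune $M_n(2)$ and $P_n(2)$ for $n\ge 5$ via their $U_{n,2n-3}$-minors (hence $U_{3,5}$-minors), discard $F_7^*$ because it has $M(K_{2,3})$ as a minor, and check the survivors form an antichain. One small slip: $P_4(2)$ has seven elements (it is $M(\mathcal{W}_4)$ with a rim element deleted), not six, so the incomparability of $P_4(2)$ and $M(K_{2,3})$ needs the extra one-line observation that every rank-preserving single-element deletion of $P_4(2)$ still contains a triangle while $M(K_{2,3})$ has girth four; with that correction the argument is complete.
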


Next we describe the  intersection of the class of graphic matroids with the classes of  2-laminar and 2-closure-laminar matroids  both constructively and via excluded minors.  

\begin{corollary}
\label{T2LM} A matroid $M$  is graphic and $2$-laminar if and only if it has no minor isomorphic to $U_{2,4}$, $M(K_{2,3})$, $F_7$,  $M^*(K_{3,3})$,  or $N_5(2)$.
\end{corollary}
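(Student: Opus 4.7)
The plan is to invoke \cite[Lemma 14.5.1]{James} exactly as in the preceding corollaries: since graphic and $2$-laminar are both minor-closed classes, the excluded minors for their intersection are the minor-minimal members of the union of the two excluded-minor families. Tutte's theorem gives the graphic excluded minors $U_{2,4}$, $F_7$, $F_7^*$, $M^*(K_5)$, $M^*(K_{3,3})$, while Theorem~\ref{EM2LM} gives the $2$-laminar excluded minors $M^-(K_{2,3})$, $M_n(2)$ for $n\geq4$, and $N_n(2)$ for $n\geq5$. The work is then to pare the combined list down to its minor-minimal elements.

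On the $2$-laminar side, most entries are absorbed by $U_{2,4}$. By Lemma~\ref{nb}, $M^-(K_{2,3})$ already has a $U_{2,4}$-minor; the matroid $M_n(2)$ has $U_{n,2n-3}$ as a minor, which contains $U_{2,4}$ by iterated single-element contraction once $n\geq 5$; and $N_n(2)$ has $U_{n,2n-4}$ as a minor, which analogously contains $U_{2,4}$ once $n\geq 6$. This eliminates $M^-(K_{2,3})$, $M_n(2)$ for $n\geq 5$, and $N_n(2)$ for $n\geq 6$, leaving $M_4(2)=M(K_{2,3})$ and $N_5(2)$ as the surviving $2$-laminar contributions.

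On the graphic side, $F_7^*$ is absorbed because deleting any element of $F_7^*$ produces $M(K_{2,3})$, a fact already recorded in the paper preceding the ternary corollaries. The one piece needing a genuinely new verification is $M^*(K_5)$, and I would handle it dually. Contracting two vertex-disjoint edges of $K_5$ produces a multigraph on three vertices in which each pair of vertices is joined by at least two edges; deleting two of the four parallel edges between the contracted pair leaves $K_3$ with every edge doubled, whose cycle matroid is a rank-$2$ matroid with three parallel pairs, namely $M^*(K_{2,3})$. Thus $M(K_5)$ has an $M^*(K_{2,3})$-minor, and dually $M^*(K_5)$ has an $M(K_{2,3})$-minor, absorbing it as well.

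The survivors are therefore $U_{2,4}$, $M(K_{2,3})$, $F_7$, $M^*(K_{3,3})$, and $N_5(2)$, matching the statement. The main obstacle is the dualised $K_5$ computation, since everything else follows directly from Lemma~\ref{nb} and Theorem~\ref{EM2LM}; once that minor-containment is in hand, the rest is essentially bookkeeping.
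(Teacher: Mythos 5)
Your overall strategy is exactly the paper's: the graphic corollaries are obtained by applying \cite[Lemma 14.5.1]{James} to Tutte's excluded minors for graphic matroids together with the list from Theorem~\ref{EM2LM}. Your absorption of $M^-(K_{2,3})$, of $M_n(2)$ for $n\geq 5$ and $N_n(2)$ for $n\geq 6$ via Lemma~\ref{nb}, of $F_7^*$ via its $M(K_{2,3})$ single-element deletion, and of $M^*(K_5)$ via the dualized matching-contraction argument are all correct, and the last of these is indeed the one genuinely new verification needed.

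There is, however, a gap in the final step. Lemma 14.5.1 keeps only the \emph{minor-minimal} members of the combined list, so you must also check that no survivor has another list member as a proper minor; you assert the surviving list without doing this, and the check actually fails for $M^*(K_{3,3})$. The argument you use for $K_5$ applies verbatim to $K_{3,3}$: contracting a perfect matching of $K_{3,3}$ identifies the two colour classes in pairs and produces the triangle with every edge doubled, whose cycle matroid is $M^*(K_{2,3})$; dualizing, deleting those three elements from $M^*(K_{3,3})$ yields $M(K_{2,3})$. Thus $M^*(K_{3,3})$ has $M(K_{2,3})=M_4(2)$ as a proper minor and is not minor-minimal in the union, so a strict application of Lemma 14.5.1 returns the list $U_{2,4}$, $M(K_{2,3})$, $F_7$, $N_5(2)$ \emph{without} $M^*(K_{3,3})$. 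The biconditional as stated is still true---excluding $M(K_{2,3})$ already excludes $M^*(K_{3,3})$, so the extra entry is merely redundant---but your derivation does not justify the list you claim: you should either record this redundancy explicitly or drop $M^*(K_{3,3})$, which would also bring the corollary in line with its $2$-closure-laminar companion, where $M^*(K_{3,3})$ is (correctly) absent.
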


\begin{corollary}
\label{T2CLM} A matroid $M$  is graphic and $2$-closure-laminar if and only if it has no minor isomorphic to $U_{2,4}$, $M(K_{2,3})$, $F_7$,   or $P_4(2)$.
\end{corollary}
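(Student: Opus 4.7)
The plan mirrors the template of Corollaries~\ref{T2LM} and~\ref{T2CLM}. By the principle noted just before Corollary~\ref{B2CL} (Oxley \cite[Lemma 14.5.1]{James}), the excluded minors for the intersection of two minor-closed classes are the minor-minimal members of the union of their excluded-minor sets. Here the two sets are Tutte's graphic excluded minors $\{U_{2,4}, F_7, F_7^*, M^*(K_5), M^*(K_{3,3})\}$ and the 2-closure-laminar excluded minors $\{M^-(K_{2,3})\}\cup\{M_n(2):n\ge 4\}\cup\{P_n(2):n\ge 4\}$ provided by Theorem~\ref{EM2LCM}. I would first verify that $U_{2,4}, F_7, M(K_{2,3})=M_4(2), P_4(2)$ are minor-minimal in the union; the latter three are binary with ranks $3, 4, 4$, so none contains $U_{2,4}$, and straightforward size-and-rank comparisons rule out any of them being a proper minor of another.

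Next I would show every other matroid in the union contains one of the four as a proper minor. The paper already records that $F_7^*$ has $M(K_{2,3})$ as a single-element deletion. The matroid $M^-(K_{2,3})$ is non-binary by Lemma~\ref{nb}, so it has a $U_{2,4}$-minor. By Lemma~\ref{nb}, each of $M_n(2)$ and $P_n(2)$ has a $U_{n,2n-3}$-minor; for $n\ge 5$ we have $2n-3\ge n+2$, and contracting any $n-2$ independent elements of $U_{n,2n-3}$ followed by deleting $n-5$ elements yields $U_{2,4}$ as a minor.

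The only remaining case, which I expect to be the main obstacle, is producing an $M(K_{2,3})$-minor inside each of $M^*(K_5)$ and $M^*(K_{3,3})$. I would proceed by matroid duality. Observing that $K_{2,3}$ is planar with three quadrilateral faces and that each pair of faces shares exactly two edges, its planar dual is the ``doubled triangle'' $C_3^{2\times}$ on three vertices with two parallel edges joining each pair; hence $M(C_3^{2\times})\cong M^*(K_{2,3})$. I would then exhibit $C_3^{2\times}$ as a graph minor of $K_{3,3}$ by contracting any perfect matching (the three merged vertex-pairs are then joined to each other by exactly two parallel edges each), and as a graph minor of $K_5$ by contracting two vertex-disjoint edges and then deleting two of the four resulting parallel edges between the fused pair of vertices. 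Taking matroid duals converts these graph minors into $M^*(K_{2,3})\cong M(C_3^{2\times})$-minors of $M(K_{3,3})$ and $M(K_5)$, and hence into $M(K_{2,3})$-minors of $M^*(K_{3,3})$ and $M^*(K_5)$, completing the elimination of non-minimal members and establishing the corollary.
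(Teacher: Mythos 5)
Your proof is correct and follows exactly the route the paper indicates (the paper states only that the corollary ``follows without difficulty'' from Lemma~\ref{nb}, Theorem~\ref{EM2LCM}, Tutte's excluded-minor characterization of graphic matroids, and \cite[Lemma 14.5.1]{James}): one identifies $U_{2,4}$, $F_7$, $M(K_{2,3})=M_4(2)$, and $P_4(2)$ as the minor-minimal members of the union of the two excluded-minor lists, which is what you do. The only step requiring genuine verification is your planar-duality argument that $M^{*}(K_{2,3})$ is the cycle matroid of the doubled triangle and that this graph is a minor of both $K_{3,3}$ (contract a perfect matching) and $K_5$ (contract two disjoint edges and delete two of the resulting parallel edges), whence $M(K_{2,3})$ is a minor of $M^{*}(K_{3,3})$ and of $M^{*}(K_5)$; this checks out, and it incidentally shows that $M^{*}(K_{3,3})$ is a redundant entry in the paper's companion characterization of graphic $2$-laminar matroids, since it properly contains $M(K_{2,3})$.
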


\begin{lemma}
\label{blah}
Let $M$ be a simple, connected, graphic matroid. Then $M$ is $2$-laminar if and only if    $M$ is a coloop, $M$ is isomorphic to $M(K_4)$, or $M$ is the cycle matroid of a  graph consisting of a cycle with at most two chords  such that, when there are two chords, they are of the form $(u,v_1)$ and $(u,v_2)$ where $v_1$ is adjacent to $v_2$.
\end{lemma}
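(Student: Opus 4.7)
The strategy is to reduce the question to Corollary~\ref{T2LM}: a graphic matroid is $2$-laminar if and only if it has no minor isomorphic to any of $U_{2,4}, M(K_{2,3}), F_7, M^*(K_{3,3}), N_5(2)$. Since $M(G)$ is graphic, only $M(K_{2,3})$ and $M(N_5(2))$ can be obstructions, and when $G$ is simple and 2-connected (automatic once $r(M)\ge 2$), matroid minors of $M(G)$ correspond to graph minors of $G$ up to 2-isomorphism (Whitney flips).

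For the "if" direction I would verify each listed graph is $2$-laminar directly. A coloop and a cycle each have at most one circuit, so Proposition~\ref{baby}(vi) applies. For $M(K_4)$, for a cycle with one chord, and for a cycle with two chords $uv_1, uv_2$ where $v_1 v_2$ is a cycle edge, I would enumerate all circuits and check that every pair intersecting in at least two edges has the smaller closure contained in the larger; in each case the structure is rigid enough that this is a short case check.

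For the converse, assume $G$ is simple and 2-connected with $M(G)$ $2$-laminar, and induct on the cyclomatic number $\nu(G) = |E(G)| - |V(G)| + 1$. The cases $\nu = 1$ (cycle) and $\nu = 2$ (theta graph) are quick: in a theta graph with three internally disjoint $u$-$v$ paths all of length $\ge 2$, $G$ is a $K_{2,3}$-subdivision, contradicting $2$-laminarity; hence one path has length $1$ and $G$ is a cycle with one chord. The heart of the proof is $\nu = 3$, where $G$ is a cycle with two added ears. I would split into subcases on whether each ear is a chord or longer and on how the ear endpoints meet on the cycle. The allowed configurations are the two diagonals of $C_4$ (producing $K_4$) and two chords sharing a common vertex with cycle-adjacent other endpoints. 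In every remaining configuration --- crossing chords on a cycle of length $\ge 5$, two disjoint chords, two chords sharing a vertex with non-adjacent other endpoints, one chord plus a long ear, or two long ears --- I would exhibit either $K_{2,3}$ as a graph minor (typically by a single well-chosen edge deletion, as in the proof of Theorem~\ref{EM2LM}) or, when $G$ itself does not obviously contain $N_5(2)$, perform a Whitney flip at the natural cut-pair to expose $N_5(2)$ as a graph minor (illustrated by the hexagon with chords $13, 15$, which flips at $\{1,4\}$ into the $N_5(2)$-graph). Finally, $\nu \ge 4$ is ruled out by showing that a single edge deletion or contraction reduces $G$ to a forbidden $\nu = 3$ configuration. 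The most delicate step is the Whitney-flip maneuver in the $\nu = 3$ case, where matroid minors genuinely diverge from ordinary graph minors.
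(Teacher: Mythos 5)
Your overall strategy is sound and your key examples check out, but you have taken a genuinely different and far more laborious route than the paper. The paper's proof is essentially three sentences: it invokes the Chartrand--Harary theorem that a $2$-connected graph is outerplanar if and only if it has no $K_4$- or $K_{2,3}$-minor. A non-outerplanar $2$-connected graph other than $K_4$ thus has a $K_{2,3}$-minor, so its cycle matroid contains the excluded minor $M(K_{2,3})=M_4(2)$; a $2$-connected outerplanar graph is a Hamiltonian cycle with pairwise non-crossing chords, and any two chords not of the prescribed form yield an $N_5(2)$-minor. This one citation collapses your entire $\nu=2$, $\nu=3$, and $\nu\ge 4$ analysis. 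Your reduction to the excluded-minor characterization of graphic $2$-laminar matroids (so that only $M(K_{2,3})$ and $N_5(2)$ matter) is legitimate and not circular, and your concern about Whitney flips is well-founded since $N_5(2)$ is not $3$-connected; your hexagon with chords $13$ and $15$ does twist at $\{1,4\}$ onto the $N_5(2)$-graph. You could, however, sidestep $2$-isomorphism entirely: in the hexagon $u\,a\,v_1\,b\,v_2\,c$ with chords $uv_1$ and $uv_2$, the two $5$-cycles each using one chord meet in exactly the two edges $v_1b$ and $bv_2$, and neither lies in the closure of the other, so non-$2$-laminarity can be certified directly without identifying the minor.

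The genuine gap is the step ``$\nu\ge 4$ is ruled out by showing that a single edge deletion or contraction reduces $G$ to a forbidden $\nu=3$ configuration.'' That assertion is exactly the content that needs proof: an arbitrary single deletion or contraction may destroy $2$-connectivity or simplicity, or may land on an \emph{allowed} configuration ($K_4$ or a cycle with two good chords), so you must show that some choice always produces a forbidden one. Doing so amounts to classifying the minimum-degree-$3$ topological cores with $\nu=4$ and checking each, and nothing in your sketch does this. Relatedly, your list of $\nu=3$ subcases is asserted rather than derived to be exhaustive; for example, the graph consisting of four internally disjoint $u$--$v$ paths is a $2$-connected $\nu=3$ configuration that does not obviously fall under any of your headings (it is in fact excluded because at least three of the paths have length at least two and so form a $K_{2,3}$-subdivision, but that needs saying). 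Both gaps are fillable, but filling them constitutes most of the work---precisely the work that the appeal to Chartrand--Harary avoids.
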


\begin{proof}
Clearly each of the specified matroids is $2$-laminar. Now 
let $G$ be a simple, 2-connected graph. Suppose first that $G$ is not    outerplanar. By a theorem of Chartrand and Harary~\cite{CH}, either $G$ is $K_4$ or $G$ has $K_{2,3}$ as a minor. In the latter case, $M(G)$ is not 2-laminar. Now suppose that $G$ is outerplanar. If $G$ has two chords that are not of the form $(u,v_1)$ and $(u,v_2)$ where $v_1$ is adjacent to $v_2$, then $M(G)$ has $N_4(2)$ as a minor, and so is not 2-laminar. \end{proof}

\begin{proposition}
Let $M$ be a simple, connected, graphic  matroid. Then $M$ is $2$-closure-laminar if and only if $M$ is a   coloop, $M$ is isomorphic to $M(K_4)$, or $M$ is the cycle matroid of a  cycle with at most one chord.
\end{proposition}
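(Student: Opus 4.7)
The plan is to invoke Proposition~\ref{baby}(i) to pass from $2$-closure-laminar to $2$-laminar and then apply Lemma~\ref{blah}, which narrows $M$ to a coloop, $M(K_4)$, or the cycle matroid of an $n$-cycle possibly equipped with one or two chords of the restricted form allowed in that lemma. It then remains to decide which of these candidates are actually $2$-closure-laminar, and the split turns out to be exactly: everything except the graphs with two chords.

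For the ``if'' direction I would verify, in each of the families corresponding to the right-hand side of the proposition, that any two non-spanning circuits $C_1,C_2$ satisfy $r(\cl(C_1)\cap\cl(C_2))\le 1$, so that the hypothesis of Proposition~\ref{baby}(iv) is never triggered. A coloop has no circuits at all; the chord-free $n$-cycle has its only circuit spanning, so Proposition~\ref{baby}(vi) applies immediately; for a cycle with a single chord $e$ the only non-spanning circuits are the two subcycles $C',C''$ obtained by combining $e$ with each of the two arcs of the big cycle, and since in a simple graphic matroid the closure of a cycle consists of all edges spanned by its vertex set, $V(C')\cap V(C'')$ is the two endpoints of $e$ and hence $\cl(C')\cap\cl(C'')=\{e\}$; finally, in $M(K_4)$ every non-spanning circuit is a triangle whose closure equals itself, while any two triangles of $K_4$ share exactly one edge. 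In every case the rank of the intersection of closures is at most one and the condition is vacuously satisfied.

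For the ``only if'' direction I would show that a cycle with two chords of the form permitted by Lemma~\ref{blah} always admits $P_4(2)$ as a minor, which by Lemma~\ref{therest}(iii) rules out $2$-closure-laminarity. Writing the cycle as $v_1v_2\cdots v_nv_1$ with chords $uv_1$ and $uv_2$, where $u$ is a cycle vertex non-adjacent to either of $v_1,v_2$, both of the cycle-paths $v_2\cdots u$ and $u\cdots v_1$ avoiding the edge $v_1v_2$ have length at least two. Contracting interior edges of these two paths until each has length exactly two produces a minor on five vertices and seven edges, consisting of the triangle $\{v_1v_2,uv_1,uv_2\}$ together with two length-two paths joining the pairs $(v_2,u)$ and $(u,v_1)$. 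I would then identify this graph with the rank-$4$ wheel minus a rim element, i.e., the graph underlying $P_4(2)$. The step I expect to require the most care is precisely this graph-theoretic identification of the contracted minor with $P_4(2)$; once it is in hand, Lemma~\ref{therest}(iii) combined with the candidate list from Lemma~\ref{blah} closes the argument.
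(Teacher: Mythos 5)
Your proposal is correct and follows essentially the same route as the paper: reduce to the candidate list of Lemma~\ref{blah} and eliminate the two-chord configurations by exhibiting a $P_4(2)$-minor. The paper's proof is just a one-line version of this (leaving the verification of the ``if'' direction and the graph-theoretic identification with $P_4(2)$ implicit), so your added details are consistent with, rather than divergent from, its argument.
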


\begin{proof}
This  follows from Lemma~\ref{blah} by noting that  the cycle matroid of a cycle  with two chords  of the form $(u,v_1)$ and $(u,v_2)$ where $v_1$ is adjacent to $v_2$ has $P_4(2)$ as a minor. 
\end{proof}

We now show  that the intersections of the classes of $k$-laminar and $k$-closure-laminar matroids with the class of paving matroids coincide. 

\begin{theorem}
\label{pav1}
Let $M$ be a paving matroid, and $k$ be a non-negative integer. Then  $M$ is  $k$-laminar if and only if 
$M$ is   $k$-closure-laminar.
\end{theorem}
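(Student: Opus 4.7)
The plan is to note that by Proposition~\ref{baby}(i) the implication $k$-closure-laminar $\Rightarrow$ $k$-laminar holds in any matroid, so only the converse direction needs attention. I would assume $M$ is a paving matroid of rank $r$ that is $k$-laminar and invoke Proposition~\ref{baby}(iv) to reduce the $k$-closure-laminar condition to the following: for any non-spanning circuits $C_1$ and $C_2$ of $M$ with $r(\cl(C_1)\cap\cl(C_2))\geq k$, one has $C_1\subseteq\cl(C_2)$ or $C_2\subseteq\cl(C_1)$. The structural facts about paving matroids I would use are standard: every non-spanning circuit has size exactly $r$ with closure a rank-$(r-1)$ hyperplane, and every subset of $E(M)$ of size at most $r-1$ is independent.

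Setting $F_i=\cl(C_i)$, the case $F_1=F_2$ gives the desired conclusion immediately. When $F_1\neq F_2$, submodularity forces $r(F_1\cap F_2)\leq r-2$, which combined with the paving property gives that $F_1\cap F_2$ is itself independent, with $|F_1\cap F_2|=r(F_1\cap F_2)\geq k$. The heart of the argument is then the following inline sublemma: in a paving matroid of rank $r$, every independent subset $I$ of a dependent rank-$(r-1)$ hyperplane $F$ with $|I|\leq r-1$ is contained in some circuit lying in $F$. To prove it, extend $I$ to a basis $B$ of $F$ and pick any $e\in F-B$; then $B\cup e$ is dependent of size exactly $r$, and since every circuit in a paving matroid has size at least $r$, the set $B\cup e$ must itself be a circuit, containing $I$ and lying in $F$.

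Applying this sublemma inside both $F_1$ and $F_2$ with $I=F_1\cap F_2$, I obtain non-spanning circuits $C_1'\subseteq F_1$ and $C_2'\subseteq F_2$ with $C_1'\cap C_2'\supseteq F_1\cap F_2$, so $|C_1'\cap C_2'|\geq k$. Since $\cl(C_i')=F_i$, invoking the $k$-laminarity hypothesis on the pair $C_1',C_2'$ forces $F_1\subseteq F_2$ or $F_2\subseteq F_1$, contradicting that these are distinct hyperplanes of the same rank. The only step that takes any real work is the sublemma, which succeeds because a dependent set of size $r$ in a paving matroid must automatically be a circuit; everything else is routine submodularity and bookkeeping.
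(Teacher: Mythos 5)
Your proposal is correct and follows essentially the same route as the paper: reduce to one direction via Proposition~\ref{baby}(i), then replace the offending circuits $C_1,C_2$ by circuits $C_1',C_2'$ with the same closures that both contain a basis of $\cl(C_1)\cap\cl(C_2)$, violating $k$-laminarity. The paper phrases the key step as ``proper flats of a paving matroid are uniform'' while you phrase it as ``a dependent set of size $r$ in a paving matroid is a circuit,'' but these are the same observation doing the same job.
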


\begin{proof}
By Proposition~\ref{baby}(i), it suffices to prove that  if  $M$ is  not $k$-closure-laminar, then $M$ is not $k$-laminar.  We use the elementary observation that, since $M$ is paving, for every flat $F$, either $F=E(M)$, or $M|F$ is   uniform.
Suppose that   $C_1$ and $C_2$ are circuits of $M$ for which $r(\cl(C_1)\cap\cl(C_2))\geq k$  but neither $\cl(C_1)$ nor $\cl(C_2)$ is contained in the other. Then neither $\cl(C_1)$ nor $\cl(C_2)$ is spanning. Hence  both $M|\cl(C_1)$ and $M|\cl(C_2)$ are uniform. Let $X$ be a basis of $\cl(C_1)\cap\cl(C_2)$. Then $M$ has circuits $C_1'$ and $C_2'$ containing $X$ such that $\cl(C_i')=\cl(C_i)$ for each $i$.  Thus $M$ is not $k$-laminar. 
\end{proof} 

It is well known that the unique excluded minor for the class of paving matroids is $U_{0,1} \oplus U_{2,2}$. Using this, in conjunction with Theorems~\ref{EM2LM} and \ref{pav1}, it is not difficult to obtain the following. 

\begin{corollary}
\label{T2LP} The following are equivalent for a matroid $M$.
\begin{itemize}
\item[(i)] $M$ is  $2$-laminar and paving;
\item[(ii)] $M$ is  $2$-closure-laminar and paving;
\item[(iii)] $M$ has no minor in $\{U_{0,1} \oplus U_{2,2}, M^-(K_{2,3})\} \cup \{M_n(2):  n \ge 4\}$.
\end{itemize}
\end{corollary}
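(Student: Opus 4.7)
The plan is to get (i)~$\Leftrightarrow$~(ii) immediately from Theorem~\ref{pav1}, since the class of paving matroids is minor-closed and so adding the paving constraint does not disturb the coincidence of 2-laminar and 2-closure-laminar established there. For the equivalence with (iii), I will invoke the standard principle \cite[Lemma 14.5.1]{James}: the excluded minors of the intersection $\Ms \cap \Ns$ of two minor-closed classes are the minor-minimal members of the union of the two excluded-minor sets. Since the unique excluded minor for paving is $U_{0,1} \oplus U_{2,2}$, and Theorem~\ref{EM2LM} lists the excluded minors for 2-laminar as $M^-(K_{2,3})$, $M_n(2)$ ($n \ge 4$), and $N_n(2)$ ($n \ge 5$), the task reduces to finding the minor-minimal elements of
\[
\mathcal{E} = \{U_{0,1} \oplus U_{2,2},\ M^-(K_{2,3})\} \cup \{M_n(2) : n \ge 4\} \cup \{N_n(2) : n \ge 5\}.
\]

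The heart of the argument is to show that every $N_n(2)$ with $n \ge 5$ is killed by $U_{0,1} \oplus U_{2,2}$. By construction, $N_n(2)$ contains a 4-element circuit $C$ (coming from the original $(k+2)$-circuit with $k=2$), while $r(N_n(2)) = n \ge 5 > |C|$, so $N_n(2)$ is not paving. A standard argument then produces $U_{0,1} \oplus U_{2,2}$ as a (necessarily proper) minor: contract $|C|-1$ elements of $C$ to turn the last element of $C$ into a loop, and then choose any two further elements that remain independent in the contraction (possible because $r(N_n(2)) > |C|$). Hence $N_n(2)$ is not minor-minimal in $\mathcal{E}$.

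Conversely, I must verify that $U_{0,1} \oplus U_{2,2}$, $M^-(K_{2,3})$, and the $M_n(2)$ for $n \ge 4$ form an antichain under the minor order. The matroids $M^-(K_{2,3})$ and the various $M_n(2)$ are pairwise minor-incomparable already, since each is an excluded minor for the 2-laminar class (Theorem~\ref{EM2LM}). It therefore suffices to show that $M^-(K_{2,3})$ and every $M_n(2)$ is paving, so that they do not contain $U_{0,1} \oplus U_{2,2}$ as a minor, and conversely that $U_{0,1} \oplus U_{2,2}$ is not a minor of either (which is the same statement). For $M^-(K_{2,3})$, all circuits of $M(K_{2,3})$ have size $4 = r(M(K_{2,3}))$, and relaxing a circuit-hyperplane only replaces one $4$-circuit by $5$-element circuits, so $M^-(K_{2,3})$ is paving. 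For $M_n(2)$, the cycles of the underlying graph have sizes $n,\, n,\, 2n-4$, all at least $n$; truncation to rank $n$ preserves those of size $\le n+1$ and adds only $(n+1)$-element circuits, so every circuit has size $\ge n = r(M_n(2))$, i.e.\ $M_n(2)$ is paving.

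The only non-routine step is the displayed $U_{0,1} \oplus U_{2,2}$-minor extraction from $N_n(2)$; once that is in hand, the minor-minimality bookkeeping is mechanical, and the corollary follows by assembling the three lists. I expect no further obstacles.
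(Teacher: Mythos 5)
Your proposal is correct and matches the route the paper intends (the paper leaves the proof as "not difficult," citing exactly Lemma 14.5.1 of \cite{James}, Theorem~\ref{pav1}, and Theorem~\ref{EM2LM}): the one substantive point is that each $N_n(2)$ with $n\ge 5$ has a $4$-element circuit while having rank $n\ge 5$, so it is not paving and is eliminated by $U_{0,1}\oplus U_{2,2}$, while $M^-(K_{2,3})$ and the $M_n(2)$ are paving and hence survive as minor-minimal. Your verifications of these facts are accurate, so nothing is missing.
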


\end{document}